\numberwithin{equation}{section}
\newtheorem{thm}{Theorem}[section]
\newtheorem{prop}[thm]{Proposition}
\newtheorem{lem}[thm]{Lemma}
\theoremstyle{remark}
\newtheorem{rem}[thm]{Remark}
\theoremstyle{definition}
\newtheorem{defn}[thm]{Definition}
\newtheorem{ex}[thm]{Example}
\newcommand{\D}[4]{{}^{#1}_{#2}D_{#3}^{#4}}
\renewcommand{\L}{\mathcal L}
\newcommand{\N}{\mathbb N}
\newcommand{\R}{\mathbb R}
\newcommand{\ceil}[1]{\lceil #1 \rceil}
\renewcommand{\d}{\mathrm d}
\newcommand{\e}{\mathrm e}
\renewcommand{\i}{\mathrm i}
\DeclareMathOperator*{\erfc}{erfc}
\DeclareMathOperator*{\Var}{Var}
\begin{document}

\title{A unified way to solve IVPs and IBVPs for the time-fractional diffusion-wave equation}
\author{Marianito R. Rodrigo\thanks{School of Mathematics and Applied Statistics,
University of Wollongong, Wollongong, New South Wales, Australia.
E-mail: {\tt marianito\_rodrigo@uow.edu.au}}}
\date{September 3, 2021}
\maketitle

\begin{abstract}

\noindent
The time-fractional diffusion-wave equation is revisited, where the time derivative is of order~$2 \nu$ and $0 < \nu \le 1$. The behaviour of the equation is ``diffusion-like'' (respectively, ``wave-like'') when $0 < \nu \le \frac{1}{2}$ (respectively, $\frac{1}{2}  < \nu \le 1$). Two types of time-fractional derivatives are considered, namely the Caputo and Riemann-Liouville derivatives. Initial value problems and initial-boundary value problems are investigated and handled in a unified way using an embedding method. A two-parameter auxiliary function is introduced and its properties are investigated. The time-fractional diffusion equation is used to generate a new family of probability distributions, and that includes the normal distribution as a particular case.

\bigskip
\noindent {\bf Keywords:} fractional calculus; heat equation; wave equation; time-fractional diffusion-wave equation

\medskip
\noindent {\bf MSC~2020 Subject Classifications:} 26A33; 35R11; 35K05; 35L05; 60E05


\end{abstract}

\section{Introduction}

Let $x \in \R$ and $t \ge 0$. Consider the following partial differential equation~(PDE) for the function~$u(x,t)$:
\begin{equation}
\label{diff-wave-PDE}
D^{2 \nu} u = \kappa \frac{\partial^2 u}{\partial x^2}, 
\end{equation}
where $\kappa > 0$ and $0 < \nu \le 1$. The ``time-fractional derivative operator''~$D^{2 \nu}$ is to be defined such that if $\nu = \frac{1}{2}$ (respectively, $\nu = 1$), then \eqref{diff-wave-PDE} is the classical diffusion equation (respectively, wave equation). If $0 < \nu \le \frac{1}{2}$ (respectively, $\frac{1}{2} < \nu \le 1$), then we say that the behaviour of \eqref{diff-wave-PDE} is ``diffusion-like'' (respectively, ``wave-like'') and call \eqref{diff-wave-PDE} the time-fractional diffusion equation (respectively, time-fractional wave equation). More generally, when $0 < \nu \le 1$, we refer to \eqref{diff-wave-PDE} as the time-fractional diffusion-wave equation~\citep{Ma1996}. A space-fractional diffusion-wave equation has also been studied in the literature~\citep{MaPaGo2007} but will not be considered in this article.

To see how $D^{2 \nu}$ can be defined, let us review some pertinent definitions from the theory of the fractional calculus; see for instance \citep{Di2010,MiRo1993,Or2011,Po1999,SaKiMa2002} and the comprehensive references therein. For a suitable function~$y(t)$, the Riemann-Liouville fractional integral of order~$\nu > 0$ is defined as
$$
\D{}{0}{t}{-\nu} y(t) = \frac{1}{\Gamma(\nu)} \int_0^t (t - \tau)^{\nu - 1} y(\tau) \, \d \tau,
$$
where $\Gamma$ is the Euler gamma function. Let $\ceil{\nu}$ denote the least integer greater than or equal to $\nu$. It follows that $\ceil{\nu} \ge \nu > 0$. Then the Caputo fractional derivative of order~$\nu > 0$ is given by
$$
\D{C}{0}{t}{\nu} y(t) =  \D{}{0}{t}{-(\ceil{\nu} - \nu)}  D^{\ceil{\nu}}  y(t),
$$
while the Riemann-Liouville fractional derivative of order~$\nu > 0$ is defined as
$$
\D{}{0}{t}{\nu} y(t) = D^{\ceil{\nu}} \D{}{0}{t}{-(\ceil{\nu} - \nu)} y(t).
$$
Observe that $\D{}{0}{t}{-(\ceil{\nu} - \nu)}$ is a Riemann-Liouville fractional integral operator of order~$(\ceil{\nu} - \nu)$ and $D^{\ceil{\nu}}$ is the ordinary derivative operator of order~$\ceil{\nu}$. If $\nu = n \in \N$, then the Riemann-Liouville fractional integral becomes $n$-fold integration, while the Caputo and Riemann-Liouville fractional derivatives reduce to $n$-fold differentiation. When $0 < \nu \le 1$, it can be shown~\citep{Di2010} that the Caputo and Riemann-Liouville fractional derivatives are related by
\begin{equation}
\label{caputo-riemann-liouville}
\D{C}{0}{t}{\nu} y(t) = \D{}{0}{t}{\nu} y(t) - \frac{y(0+)}{\Gamma(1 - \nu)} t^{-\nu}.
\end{equation}
In this article we will study initial value problems~(IVPs) and initial-boundary value problems~(IBVPs) associated with \eqref{diff-wave-PDE} both when $D^{2 \nu} = \D{C}{0}{t}{2 \nu}$ and $D^{2 \nu} = \D{}{0}{t}{2 \nu}$. Note that $D^n u(x,t)$ when $n \in \N$ is just the $n$th partial derivative of $u$ with respect to $t$.

The Caputo time-fractional diffusion equation~\eqref{diff-wave-PDE}, where $D^{2 \nu} = \D{C}{0}{t}{2 \nu}$ and $0 < \nu \le \frac{1}{2}$, was considered by \citet{Ni1986} to describe diffusion in media with fractal geometry, i.e. in special types of porous media. \citet{Ma1993} observed that the Caputo time-fractional wave equation~\eqref{diff-wave-PDE}, where $D^{2 \nu} = \D{C}{0}{t}{2 \nu}$ and $\frac{1}{2} < \nu \le 1$, governs the propagation of mechanical diffusive waves in viscoelastic media which exhibit a power-law creep. In fact, time-fractional derivatives are expected to arise when hereditary mechanisms of power-law type are present in diffusion or wave phenomena~\citep{Ma1996}. More recently, \citet{WeChZh2015} developed a Caputo time-fractional diffusion model to decribe how chloride ions penetrate reinforced concrete structures exposed to chloride environments. 

Let $f(x)$ and $h(t)$ be given suitable functions. The Cauchy problem for the Caputo time-fractional diffusion-wave equation was studied in \citep{Ma1993,Ma1996,Ma2012,MaPaGo2007}. It is an IVP of the form
\begin{equation}
\label{mainardi-cauchy-diff}
\begin{split}
& \D{C}{0}{t}{2 \nu} u = \kappa \frac{\partial^2 u}{\partial x^2}, \quad x \in \R, \quad t > 0, \\
& u(x,0+) = f(x), \quad x \in \R
\end{split}
\end{equation}
when $0 < \nu \le \frac{1}{2}$ and
\begin{equation}
\label{mainardi-cauchy-wave}
\begin{split}
& \D{C}{0}{t}{2 \nu} u = \kappa \frac{\partial^2 u}{\partial x^2}, \quad x \in \R, \quad t > 0, \\
& u(x,0+) = f(x), \quad D^1 u(x,0+) = 0, \quad x \in \R
\end{split}
\end{equation}
when $\frac{1}{2} < x \le 1$. Moreover, the signalling problem was also considered in \citep{Ma1993,Ma1996,Ma2012,MaPaGo2007}, i.e. an IVBP of the form
\begin{equation}
\label{mainardi-signal-diff}
\begin{split}
& \D{C}{0}{t}{2 \nu} u = \kappa \frac{\partial^2 u}{\partial x^2}, \quad x > 0, \quad t > 0, \\
& u(x,0+) = 0, \quad x > 0, \\
& u(0+,t) = h(t), \quad t > 0
\end{split}
\end{equation}
when $0 < \nu \le \frac{1}{2}$ and
\begin{equation}
\label{mainardi-signal-wave}
\begin{split}
& \D{C}{0}{t}{2 \nu} u = \kappa \frac{\partial^2 u}{\partial x^2}, \quad x > 0, \quad t > 0, \\
& u(x,0+) = 0, \quad D^1 u(x,0+) = 0, \quad x > 0, \\
& u(0+,t) = h(t), \quad t > 0
\end{split}
\end{equation}
when $\frac{1}{2} < \nu \le 1$. \citet{Ma1996} showed that the fundamental solutions of the Cauchy and signalling problems can be expressed in terms of an auxiliary function~$M(z;\nu)$ of a similarity variable~$z = \frac{\vert x \vert}{t^\nu}$. 

\citet{MaPaGo2007} gave two generalisations of the classical diffusion equation by replacing either the time derivative by a Caputo time-fractional derivative or the space derivative by an appropriate pseudo-differential operator (thus obtaining a symmetric space-fractional diffusion equation). They demonstrated how the fundamental solutions of these generalised equations for the Cauchy and signalling problems provide probability density functions related to so-called stable distributions. 

\begin{rem}
\label{mainardi-ass}
It should be noted that in \citep{Ma1993,Ma1996,Ma2012,MaPaGo2007} the solutions of the Cauchy and signalling problems were assumed to decay to zero at infinity (i.e.~$u(\pm \infty,t) = 0$ in the Cauchy problem and $u(\infty,t) = 0$ in the signalling problem). Furthermore, the special initial condition~$D^1 u(x,0+) = 0$ when $\frac{1}{2} < \nu \le 1$ was chosen to ensure the continuous dependence of the solution with respect to the parameter~$\nu$ as $\nu \rightarrow \frac{1}{2}^\pm$.
\end{rem}

\citet{GoReRoSa2015} studied two IBVPs associated with a Caputo time-fractional diffusion equation on the half-line. They considered either a Dirichlet or a Neumann boundary condition~(BC) as $x \rightarrow 0^+$. These IBVPs were solved analytically by taking into account the asymptotic behaviour and the existence of bounds of the Mainardi and Wright special functions.

In this article we investigate IVPs and IBVPs for \eqref{diff-wave-PDE} in a unified way using the embedding approach introduced by \citet{RoTh2021}. Most results in the literature, such as those mentioned above, have focused on the Caputo fractional derivative primarily because it is associated with initial data that can be physically measured, e.g. $u(x,0+)$ and $D^1 u(x,0+)$ could represent the initial position and velocity, respectively. For the Riemann-Liouville derivative it is not clear what the initial data should look like and it is of theoretical interest to study \eqref{diff-wave-PDE} also for this type of derivative and compare the results obtained with those from the Caputo derivative. We will let the Laplace transform indicate what the appropriate initial conditions should be. This is akin to the idea used by \citet{Ro2016,Ro2020} to determine the correct initial conditions for the ``fractional'' analogues of the matrix exponential. 

We start by considering an IVP for an inhomogeneous time-fractional diffusion-wave equation, namely
\begin{equation}
\label{combined-IVP-diff}
\begin{split}
& D^{2 \nu} u = \kappa \frac{\partial^2 u}{\partial x^2} + F(x,t), \quad x \in \R, \quad t > 0, \\
& \Phi u(x,0+) = f(x), \quad x \in \R
\end{split}
\end{equation}
when $0 < \nu \le \frac{1}{2}$ and
\begin{equation}
\label{combined-IVP-wave}
\begin{split}
& D^{2 \nu} u = \kappa \frac{\partial^2 u}{\partial x^2} + F(x,t), \quad x \in \R, \quad t > 0, \\
& \Psi_1 u(x,0+) = f(x), \quad \Psi_2 u(x,0+) = g(x), \quad x \in \R
\end{split}
\end{equation}
when $\frac{1}{2} < \nu \le 1$. The operators~$\Phi$, $\Psi_1$ and $\Psi_2$ are defined by
\begin{equation}
\label{Phi-def}
\Phi u = 
\begin{cases}
u & \text{if $D^{2 \nu} = \D{C}{0}{t}{2 \nu}$}, \\
\D{}{0}{t}{-(1 - 2 \nu)} u & \text{if $D^{2 \nu} = \D{}{0}{t}{2 \nu}$},
\end{cases}
\end{equation}
\begin{equation}
\label{Psi-def}
\Psi_1 u = 
\begin{cases}
u & \text{if $D^{2 \nu} = \D{C}{0}{t}{2 \nu}$}, \\
\D{}{0}{t}{-(2 - 2 \nu)} u & \text{if $D^{2 \nu} = \D{}{0}{t}{2 \nu}$},
\end{cases} \quad
\Psi_2 u = 
\begin{cases}
D^1 u & \text{if $D^{2 \nu} = \D{C}{0}{t}{2 \nu}$}, \\
D^1 \D{}{0}{t}{-(2 - 2 \nu)} u & \text{if $D^{2 \nu} = \D{}{0}{t}{2 \nu}$}.
\end{cases}
\end{equation}
Here we assume that $F(x,t)$, $f(x)$ and $g(x)$ are given suitable functions. The justification for the initial conditions will be given in the next section after we recall the Laplace transforms of the fractional operators. If $F(x,t) = 0$, $D^{2 \nu} = \D{C}{0}{t}{2 \nu}$ and $g(x) = 0$, then the Cauchy problem considered previously by Mainardi and other authors is recovered. 

After obtaining the solution of the above IVP, we then turn our attention to an IVBP for a (homogeneous) time-fractional diffusion-wave equation. That is, we consider \eqref{combined-IVP-diff} and \eqref{combined-IVP-wave} with $F(x,t) = 0$ but replace $x \in \R$ by $x \ge 0$, and impose a BC as $x \rightarrow 0^+$. In this paper we focus on a Dirichlet-type BC of the form
$$
u(0+,t) = h(t), \quad t > 0
$$ 
for a suitable function~$h(t)$, although other types of linear BCs can also be considered as in \citep{RoTh2021}. The embedding method of \cite{RoTh2021} involves embedding the homogeneous PDE and initial conditions of the IBVP for $x \ge 0$ into an IVP with an inhomogeneous PDE for $x \in \R$ and using the previous result for an IVP. After solving this ``enlarged'' IVP, we then restrict the solution~$u(x,t)$ to $x \ge 0$. The embedding step introduces an arbitrary function, say $\varphi(t)$. So far, $u(x,t)$ satisfies the PDE and initial conditions of the IBVP. The last step is to determine $\varphi(t)$ by imposing the BC~$u(0+,t) = h(t)$.

\begin{rem}
To preserve generality and ensure the adaptability of our results to other contexts, we relax the assumptions stated in Remark~\ref{mainardi-ass}. Instead of imposing decay properties for the solution, we assume that $u(\pm \infty,t) < \infty$ for the IVP and $u(\infty,t) < \infty$ for the IVBP. Moreover, for $\frac{1}{2} < \nu \le 1$, we replace the initial condition~$D^1 u(x,0+) = 0$ by $D^1 u(x,0+) = g(x)$. Of course, in the special case when $g(x) = 0$, the continuous dependence of the solution with respect to the parameter~$\nu$ as $\nu \rightarrow \frac{1}{2}^\pm$ is reestablished. Lastly, the aim of this article is to derive the formal solutions of the proposed IVP and IBVP; rigorous justification of these formulas in terms of the appropriate function spaces etc. is outside the scope of this work and will be treated in a future article. 
\end{rem}

The structure of this article is as follows. In Section~2 we introduce a two-parameter auxiliary function that will be used throughout the paper and investigate some of its properties. In Section~3 we consider an IVP for the Caputo time-fractional diffusion-wave equation on the real line. An analogous IVP for a Riemann-Liouville time-fractional diffusion-wave equation on the real line is studied in Section~4. Section~5 considers IBVPs for the Caputo and Riemann-Liouville time-fractional diffusion-wave equations on the half-line using the embedding approach introduced by the author. In Section~6 we discuss how the time-fractional diffusion equation can be used to generate a new family of probability distributions. Brief concluding remarks are given in Section~7. The derivation of an integral representation of the auxiliary function in the case~$0 < \nu \le \frac{1}{2}$ is given in the Appendix.

\section{Useful formulas and preliminary results}

Recall that the Laplace transform of $y(t)$ is defined as
$$
\hat y(s) = \L\{y(t);s\} = \int_0^\infty \e^{-s t} y(t) \, \d t,
$$
provided the improper integral converges at $s$. It can be shown~\citep{Di2010,Po1999} that for any $\nu > 0$, we have
\begin{subequations}
\begin{align}
\label{lap-prop1}
\L\{\D{}{0}{t}{-\nu} y(t);s\} & = s^{-\nu} \hat y(s), \\
\label{lap-prop2}
\L\{\D{C}{0}{t}{\nu} y(t);s\} & = s^\nu \hat y(s) - \sum_{j = 0}^{\ceil{\nu} - 1} s^{\nu - 1 - j} D^j y(0+), \\
\label{lap-prop3}
\L\{\D{}{0}{t}{\nu} y(t);s\} & = s^\nu \hat y(s) - \sum_{j = 0}^{\ceil{\nu} - 1} s^{\ceil{\nu} - 1 - j} D^j \D{}{0}{t}{-(\ceil{\nu} - \nu)} y(0+).
\end{align}
\end{subequations}
Since $D^j y(0+)$ is usually measurable physically but not $D^j \D{}{0}{t}{-(\ceil{\nu} - \nu)} y(0+)$, the Caputo fractional derivative has been utilised more than the Riemann-Liouville fractional derivative in applications. Nevertheless, from a theoretical perspective, the initial conditions in \eqref{lap-prop2} and \eqref{lap-prop3} provide the motivation for the definitions of the operators~$\Phi$, $\Psi_1$ and $\Psi_2$ in \eqref{Phi-def} and \eqref{Psi-def}.

Let us now introduce a very useful two-parameter auxiliary function and investigate some of its properties.
\begin{defn}
Let $\mu \ge 0$, $0 < \nu \le 1$ and $a > 0$. Define the function
\begin{equation}
\label{rodrigo-fun}
R_{\mu,\nu}(a,t) = \L^{-1}\{s^{-\mu} \e^{-a s^\nu};t\}, \quad t > 0.
\end{equation}
\end{defn}
For $\nu = 0.3, 0.4, 0.5, 0.6, 0.7$, profiles of $R_{0,\nu}(2.5,t)$ and $R_{2 \nu,\nu}(2.5,t)$ are shown in Figure~1 and Figure~2, respectively.
\begin{figure}[h]
\centering
\includegraphics[scale=0.5]{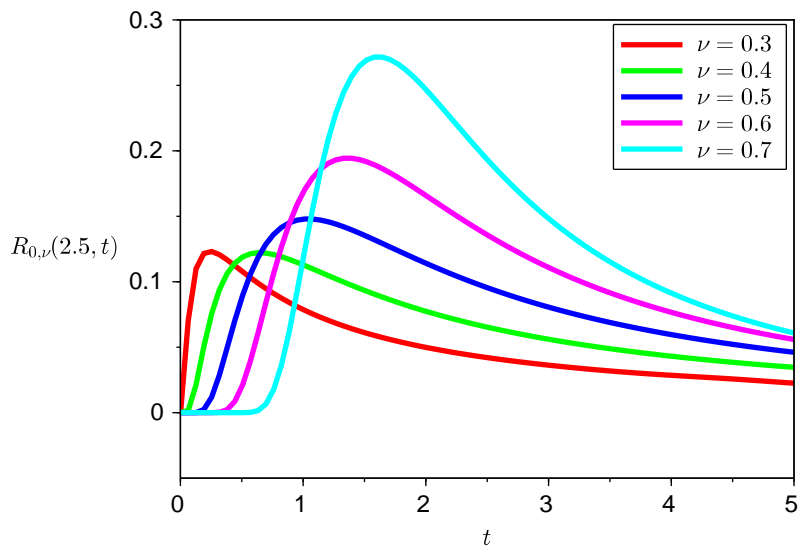}
\caption{Plot of $R_{0,\nu}(2.5,t)$ for different values of $\nu$.}
\end{figure}
\begin{figure}[ht]
\centering
\includegraphics[scale=0.5]{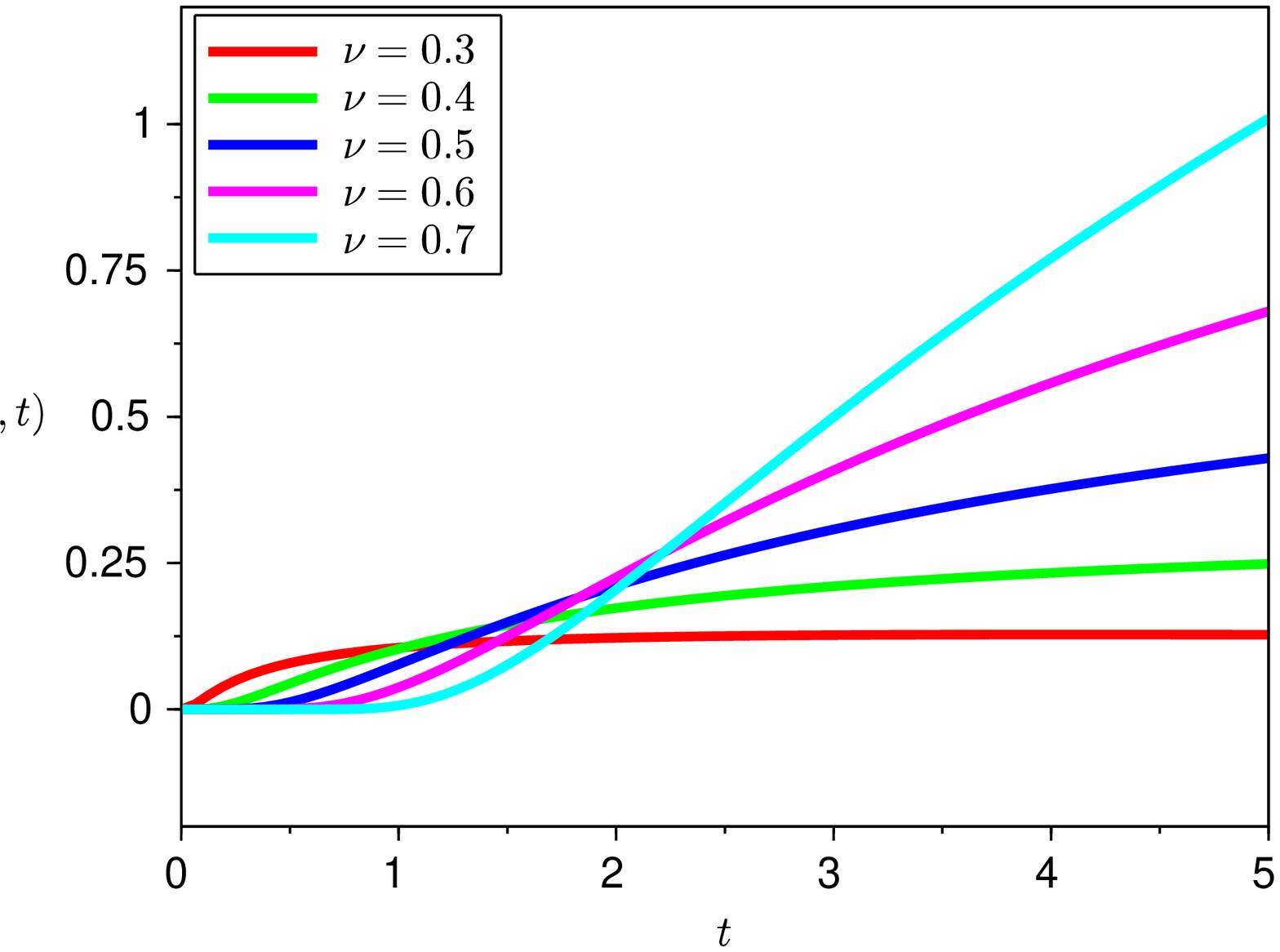}
\caption{Plot of $R_{2 \nu,\nu}(2.5,t)$ for different values of $\nu$.}
\end{figure}

\begin{rem}
It follows from a property of the Dirac delta function~$\delta$~\citep[p.~251]{Sp1965} that
$$
R_{0,1}(a,t) = \L^{-1}\{\e^{-a s};t\} = \delta(t - a).
$$
Furthermore, since $R_{0,\nu}(a,t) = \L^{-1}\{\e^{-a s^\nu};t\}$, we deduce from \eqref{lap-prop1} that
\begin{equation}
\label{rodrigo-fun-spec}
R_{\mu,\nu}(a,t) = \D{}{0}{t}{-\mu} R_{0,\nu}(a,t).
\end{equation}
\end{rem}

\begin{prop}[Properties of $R_{\mu,\nu}(a,t)$]
Suppose that $\mu \ge 0$, $0 < \nu \le 1$ and $a > 0$. Then the following properties hold:
\begin{enumerate}
\item[\rm (i)] 
\begin{equation}
\label{val-zero}
\lim_{t \rightarrow 0^+} R_{\mu,\nu}(a,t) = 0.
\end{equation}
\item[\rm (ii)] The function~$y(t) = R_{\mu,\nu}(a,t)$ satisfies the fractional integral equation
\begin{equation}
\label{rodrigo-int-eq}
a \nu \D{}{0}{t}{-(1 - \nu)} y(t) = t y(t) - \mu \int_0^t y(\tau) \, \d \tau
\end{equation}
and the fractional ordinary differential equation
\begin{equation}
\label{rodrigo-fode}
a \nu \D{}{0}{t}{\nu} y(t) = a \nu \D{C}{0}{t}{\nu} y(t) = t y'(t) + (1 - \mu) y(t).
\end{equation}
\end{enumerate}
\end{prop}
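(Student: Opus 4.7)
The plan is to attack both parts via the Laplace transform, exploiting the defining formula~$\hat y(s) = s^{-\mu} \e^{-a s^\nu}$ throughout.

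For part~(i), I would invoke the initial-value theorem for Laplace transforms, $y(0+) = \lim_{s \rightarrow \infty} s \hat y(s)$, provided the limit exists. Here $s \hat y(s) = s^{1 - \mu} \e^{-a s^\nu}$; since $a > 0$ and $0 < \nu \le 1$, the factor~$\e^{-a s^\nu}$ decays faster than any polynomial grows as $s \rightarrow \infty$, so the limit is $0$. This yields~\eqref{val-zero}.

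For the fractional integral equation~\eqref{rodrigo-int-eq}, the strategy is to verify the identity at the level of Laplace transforms. Differentiating the defining formula gives
$$
\hat y'(s) = -\mu s^{-\mu - 1} \e^{-a s^\nu} - a \nu s^{\nu - \mu - 1} \e^{-a s^\nu}.
$$
Using the standard Laplace identities $\L\{t y(t); s\} = -\hat y'(s)$, $\L\{\int_0^t y(\tau) \, \d\tau; s\} = \hat y(s)/s$, together with~\eqref{lap-prop1} applied to $\D{}{0}{t}{-(1 - \nu)} y(t)$, a short computation shows that both sides of~\eqref{rodrigo-int-eq} have Laplace transform~$a \nu s^{\nu - \mu - 1} \e^{-a s^\nu}$, so inversion gives the identity.

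For the fractional ODE~\eqref{rodrigo-fode}, I would apply~$D^1$ to both sides of~\eqref{rodrigo-int-eq}. On the left, $D^1 \D{}{0}{t}{-(1 - \nu)}$ is by definition the Riemann-Liouville derivative $\D{}{0}{t}{\nu}$ (since $\ceil{\nu} = 1$ for $0 < \nu \le 1$), producing $a \nu \D{}{0}{t}{\nu} y(t)$. On the right, the product rule gives $y(t) + t y'(t) - \mu y(t) = t y'(t) + (1 - \mu) y(t)$. The middle equality in~\eqref{rodrigo-fode} then follows from part~(i) together with~\eqref{caputo-riemann-liouville}: the correction term $y(0+) t^{-\nu}/\Gamma(1 - \nu)$ vanishes because $y(0+) = 0$.

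The main obstacle I see is the rigour of the initial-value theorem in~(i), which presupposes that $y$ has a one-sided limit at~$0^+$; a fully rigorous argument might instead rely on a direct integral representation of $R_{\mu,\nu}$ such as the one promised in the Appendix. Given that the paper explicitly aims only at formal solutions, the Laplace-transform route should suffice.
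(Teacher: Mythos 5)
Your proposal is correct and follows essentially the same route as the paper: the initial-value theorem for part~(i), differentiation of $\hat y(s) = s^{-\mu}\e^{-a s^\nu}$ combined with the standard Laplace identities and \eqref{lap-prop1} for \eqref{rodrigo-int-eq}, and ordinary differentiation of that identity plus \eqref{caputo-riemann-liouville} with $y(0+)=0$ for \eqref{rodrigo-fode}. Your closing caveat about the initial-value theorem is also consistent with the paper's explicitly formal scope.
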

\begin{proof}
(i) Applying the initial value theorem for the Laplace transform and using \eqref{rodrigo-fun}, we obtain
$$
\lim_{t \rightarrow 0^+} R_{\mu,\nu}(a,t) = \lim_{s \rightarrow \infty} s \L\{R_{\mu,\nu}(a,t);s\} = \lim_{s \rightarrow \infty} s^{1 - \mu} \e^{-a s^\nu} = 0.
$$

(ii) It follows from (i) and \eqref{caputo-riemann-liouville} that $\D{C}{0}{t}{\nu} y(t) = \D{}{0}{t}{\nu} y(t)$. Moreover, \eqref{rodrigo-fun} implies that $\hat y(s) = s^{-\mu} \e^{-a s^\nu}$ and therefore
$$
(\hat y)'(s) = -\mu s^{-\mu - 1} \e^{-a s^\nu} + s^{-\mu} \e^{-a s^\nu} (-a \nu s^{\nu - 1}) = -\mu \frac{\hat y(s)}{s} - a \nu s^{-(1 - \nu)} \hat y(s),
$$
which is equivalent to
$$
\L\{-t y(t);s\} = -\mu \L\Big\{\int_0^t y(\tau);s\Big\} - a \nu \L\{\D{}{0}{t}{-(1 - \nu)} y(t);s\}
$$
using standard properties of the Laplace transform and \eqref{lap-prop1}. Thus we derive \eqref{rodrigo-int-eq}. Finally, taking the (ordinary) derivative of both sides of \eqref{rodrigo-int-eq} with respect to $t$ and recalling the definition of the Riemann-Liouville fractional derivative, we get \eqref{rodrigo-fode}. We remark that Figures~1 and 2 were generated using the numerical Laplace inversion of \eqref{rodrigo-fun}. An alternative is to numerically solve either the fractional integral equation~\eqref{rodrigo-int-eq} or the fractional ordinary differential equation~\eqref{rodrigo-fode}.
\end{proof}

The next result gives an integral representation of $R_{0,\nu}(a,t)$ when $0 < \nu \le \frac{1}{2}$. The proof is given in the Appendix.
\begin{prop}
If $0 < \nu \le \frac{1}{2}$ and $a > 0$, then 
\begin{equation}
\label{rodrigo-real-int}
R_{0,\nu}(a,t) = \frac{1}{\pi} \int_0^\infty \e^{-t x} \e^{-a \cos(\pi \nu) x^\nu} \sin(a \sin (\pi \nu) x^\nu) \, \d x, \quad t > 0.
\end{equation}
\end{prop}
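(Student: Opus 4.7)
The plan is to invoke the Bromwich inversion formula
$$R_{0,\nu}(a,t) = \frac{1}{2 \pi \i} \int_{c - \i \infty}^{c + \i \infty} \e^{s t} \e^{-a s^\nu} \, \d s, \quad c > 0,$$
and then deform the vertical contour onto a Hankel-type contour that wraps the branch cut of $s^\nu$. Choosing the principal branch of $s^\nu$ with cut along the negative real axis makes $\e^{s t} \e^{-a s^\nu}$ holomorphic in the cut plane, so Cauchy's theorem lets me push the Bromwich line to the left provided the contributions on the closing arcs vanish.

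The decisive estimate, and the only point at which the restriction $\nu \le \tfrac{1}{2}$ is actually used, concerns $|\e^{-a s^\nu}|$ on a large arc of radius $R$. For $s = R\e^{\i \theta}$ with $|\theta| \le \pi$ one has $|\e^{-a s^\nu}| = \e^{-a R^\nu \cos(\nu \theta)}$; since $\nu \le \tfrac{1}{2}$ forces $\nu |\theta| \le \pi/2$, the exponent $\cos(\nu \theta) \ge 0$ and this factor is bounded by $1$. Combined with the exponential decay of $\e^{s t}$ for $\Real(s) < 0$ on the far left and a standard Jordan-lemma estimate on the quarter-arcs that straddle the imaginary axis, the large-arc integrals vanish as $R \to \infty$. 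A small circle of radius $\epsilon$ around the origin contributes $O(\epsilon)$ because $\e^{-a s^\nu} \to 1$ there, which vanishes as $\epsilon \to 0^+$.

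What survives are the two straight portions along the branch cut. Parametrising the upper edge by $s = x \e^{\i \pi}$ and the lower edge by $s = x \e^{-\i \pi}$ for $x \in (0, \infty)$, one obtains $s^\nu = x^\nu \e^{\pm \i \pi \nu}$, $\e^{s t} = \e^{-x t}$, and $\d s = -\d x$ on each edge. Summing the two contributions and using $\e^{\i z} - \e^{-\i z} = 2 \i \sin z$ with $z = a x^\nu \sin(\pi \nu)$ collapses the complex exponentials into a real sine, while the common factor $\e^{-a x^\nu \cos(\pi \nu)}$ comes out untouched. The prefactor $1/(2 \pi \i)$ absorbs the resulting $2 \i$ to give $1/\pi$, reproducing \eqref{rodrigo-real-int}.

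I expect the main obstacle to be making the contour-closure rigorous along the parts of the large arcs where $\Real(s)$ is near zero, since there $\e^{s t}$ alone does not supply decay. Replacing the single closing arc by two arcs joined at angle $\pm \pi/2$ and invoking the Jordan-type bound $\int_0^{\pi/2} \e^{-R t \sin \theta} \, \d \theta = O(1/(R t))$ dispatches this region; once this and the vanishing of the small-circle integral are settled, the remainder of the argument is essentially bookkeeping.
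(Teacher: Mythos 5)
Your proposal is correct and follows essentially the same route as the paper's Appendix proof: Bromwich inversion, a modified Bromwich (Hankel-type) contour around the branch cut on the negative real axis, the large-arc estimate $\vert \e^{-a s^\nu}\vert = \e^{-a R^\nu \cos(\nu\theta)}$ with $\cos(\nu\theta) \ge 0$ being exactly where $\nu \le \tfrac{1}{2}$ enters, a vanishing small circle at the origin, and the two cut edges combining via $\e^{\i z} - \e^{-\i z} = 2\i\sin z$ to yield the stated integral. Your extra care with the Jordan-type bound on the arcs near the imaginary axis is a slight refinement of the paper's argument but does not change the method.
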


\begin{rem}
A particular case when the integral in \eqref{rodrigo-real-int} can be evaluated occurs when $\nu = \frac{1}{2}$. We shall see later that this integral is related to the Green's function for the IVP for the classical diffusion equation. Indeed, \eqref{rodrigo-real-int} can be expressed as
$$
R_{0,\frac{1}{2}}(a,t) = \frac{1}{\pi} \int_0^\infty \e^{-t x} \sin(a \sqrt{x}) \, \d x = \frac{1}{\pi} \L\{\sin(a \sqrt{x});t\}.
$$
Term-by-term Laplace transformation of the Taylor series expansion
$$
\sin(a \sqrt{x}) = \sum_{j = 0}^\infty (-1)^j \frac{a^{2 j + 1} x^{j + \frac{1}{2}}}{(2 j + 1)!}
$$
and the formula~$\L\{x^p;t\} = \frac{\Gamma(p + 1)}{t^{p + 1}}$ for $p > -1$ yields
\begin{align*}
\frac{1}{\pi} \L\{\sin(a \sqrt{x});t\} & = \frac{1}{\pi} \sum_{j = 0}^\infty (-1)^j \frac{a^{2 j + 1}}{(2 j + 1)!} \L\{x^{j + \frac{1}{2}};t\} = \frac{1}{\pi} \sum_{j = 0}^\infty (-1)^j \frac{a^{2 j + 1} \Gamma(j + \frac{3}{2})}{(2 j + 1)! t^{j + \frac{3}{2}}} \\
& = \frac{a}{2 \sqrt{\pi} t^{\frac{3}{2}}} \sum_{j = 0}^\infty (-1)^j \frac{2 a^{2 j} \Gamma(j + \frac{3}{2})}{\sqrt{\pi} (2 j + 1)! t^j}.
\end{align*}
Recalling the property~$\Gamma(z + 1) = z \Gamma(z)$, we see that
$$
\frac{1}{\pi} \L\{\sin(a \sqrt{x});t\} = \frac{a}{2 \sqrt{\pi} t^{\frac{3}{2}}} \sum_{j = 0}^\infty (-1)^j \frac{a^{2 j} \Gamma(j + \frac{1}{2})}{\sqrt{\pi} (2j)! t^j}.
$$
Using the Legendre duplication formula 
$$
\sqrt{\pi} 2^{1 - 2 z} = \frac{\Gamma(z) \Gamma(z + \frac{1}{2})}{\Gamma(2 z)}
$$
gives
$$
\sqrt{\pi} 2^{1 - 2 j} = \frac{(j - 1)!  \Gamma(j + \frac{1}{2})}{(2 j - 1)!} = \frac{2 j!  \Gamma(j + \frac{1}{2})}{(2 j)!} \quad \text{or} \quad \frac{\Gamma(j + \frac{1}{2})}{\sqrt{\pi} (2j)!} = \frac{1}{2^{2 j} j!} = \frac{1}{4^j j!}.
$$
Thus
\begin{equation}
\label{rodrigo-half}
R_{0,\frac{1}{2}}(a,t) = \frac{1}{\pi} \L\{\sin(a \sqrt{x});t\} = \frac{a}{2 \sqrt{\pi} t^{\frac{3}{2}}} \sum_{j = 0}^\infty \frac{(-\frac{a^2}{4 t})^j}{j!} = \frac{a \e^{-\frac{a^2}{4 t}}}{2 \sqrt{\pi} t^{\frac{3}{2}}}.
\end{equation}
Moreover, \eqref{rodrigo-fun}, \eqref{rodrigo-fun-spec} and formula~84 in \citet[p.250]{Sp1965} give
$$
\L\Big\{R_{\frac{1}{2},\frac{1}{2}}(a,t);s\Big\} = \L\Big\{\D{}{0}{t}{-\frac{1}{2}} R_{0,\frac{1}{2}}(a,t);s\Big\} = \frac{\e^{-a \sqrt{s}}}{\sqrt{s}} = \L\Big\{\frac{\e^{-\frac{a^2}{4 t}}}{\sqrt{\pi t}};s\Big\},
$$
so that
\begin{equation}
\label{rodrigo-half-frac-int}
R_{\frac{1}{2},\frac{1}{2}}(a,t) = \D{}{0}{t}{-\frac{1}{2}} R_{0,\frac{1}{2}}(a,t) = \frac{\e^{-\frac{a^2}{4 t}}}{\sqrt{\pi t}}.
\end{equation}
\end{rem}

The next proposition will be useful when solving IBVPs for the time-fractional diffusion-wave equation.
\begin{prop}
Let $\mu \ge 0$, $0 < \nu \le 1$ and $a > 0$. Then
\begin{equation}
\label{rodrigo-imp-int}
R_{\mu + \nu,\nu}(a,t) = \int_a^\infty R_{\mu,\nu}(a',t) \, \d a', \quad t > 0.
\end{equation}
\end{prop}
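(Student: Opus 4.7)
The plan is to verify the identity by taking the Laplace transform in $t$ of both sides and checking they agree; uniqueness of the Laplace transform then yields the result.

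First I would compute the Laplace transform of the right-hand side. Assuming the regularity needed to interchange integrals, Fubini's theorem gives
\begin{equation*}
\L\Big\{\int_a^\infty R_{\mu,\nu}(a',t) \, \d a'; s\Big\} = \int_a^\infty \L\{R_{\mu,\nu}(a',t); s\} \, \d a' = \int_a^\infty s^{-\mu} \e^{-a' s^\nu} \, \d a',
\end{equation*}
using the definition \eqref{rodrigo-fun}. For $s > 0$ the inner integral evaluates explicitly to $s^{-\nu} \e^{-a s^\nu}$, so the whole expression collapses to $s^{-\mu - \nu} \e^{-a s^\nu}$. By definition \eqref{rodrigo-fun} this is precisely $\L\{R_{\mu + \nu,\nu}(a,t); s\}$, which yields \eqref{rodrigo-imp-int} upon inverting.

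The only genuine issue is justifying the Fubini swap and showing the tail integral $\int_a^\infty R_{\mu,\nu}(a',t) \, \d a'$ converges in a suitable sense so that its Laplace transform is defined. Since the problem statement is formal (consistent with the remark at the end of Section~1 that rigorous function-space issues are deferred), the proof can proceed at the level of formal manipulation of Laplace transforms, noting that for fixed $s > 0$ the integrand $s^{-\mu} \e^{-a' s^\nu}$ is integrable on $[a,\infty)$, which supplies the dominated-convergence justification in the transformed domain. No other obstacle is expected; the identity is essentially the statement that $e^{-as^\nu}/s^\nu$ is the antiderivative in $a'$ (with the appropriate sign) of $e^{-a's^\nu}$, transported to the $t$ side.
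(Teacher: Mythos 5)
Your proof is correct and is essentially identical to the paper's: both take the Laplace transform of the right-hand side, interchange the $a'$-integral with the transform, evaluate $\int_a^\infty s^{-\mu}\e^{-a's^\nu}\,\d a' = s^{-(\mu+\nu)}\e^{-as^\nu}$, and invert using the definition \eqref{rodrigo-fun}. Your additional remarks on the Fubini justification go slightly beyond the paper's (purely formal) argument but change nothing substantive.
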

\begin{proof}
We see from \eqref{rodrigo-fun} that
\begin{align*}
\L\Big\{\int_a^\infty R_{\mu,\nu}(a',t) \, \d a';s\Big\} & = \int_a^\infty \L\{R_{\mu.\nu}(a',t);s\} \, \d a' = \int_a^\infty s^{-\mu} \e^{-a' s^\nu} \, \d a' \\
& = s^{-(\mu + \nu)} \e^{-a s^\nu} = \L\{R_{\mu + \nu,\nu}(a,t);s\}
\end{align*}
and the result follows. 
\end{proof}

\begin{ex}
Assume that $\mu = \nu = \frac{1}{2}$. Then \eqref{rodrigo-imp-int} and \eqref{rodrigo-half-frac-int} imply that
\begin{equation}
\label{rodrigo-erf}
R_{1,\frac{1}{2}}(a,t) = \int_a^\infty \frac{\e^{-\frac{(a')^2}{4 t}}}{\sqrt{\pi t}} \, \d a' = \frac{2}{\sqrt{\pi}} \int_{\frac{a}{2 \sqrt{t}}}^\infty \e^{-z^2} \, \d z = \erfc\Big(\frac{a}{2 \sqrt{t}}\Big),
\end{equation}
where $\erfc$ is the complementary error function.
\end{ex}

The next lemma will be instrumental in solving various IVPs for the time-fractional diffusion-wave equation in Laplace space. 
\begin{lem}
\label{lem-uhat-eq-sol}
Suppose that $0 < \nu \le 1$. Let $\hat u(x,s)$ satisfy the inhomogeneous equation
\begin{equation}
\label{lem-uhat-eq}
\frac{\partial^2 \hat u}{\partial x^2}(x,s) - \frac{s^{2 \nu}}{\kappa} \hat u(x,s) = \hat G(x,s), \quad x \in \R
\end{equation}
and has bounded limits as $x \rightarrow \pm \infty$ for each~$s$, where $\hat G(x,s) = \L\{G(x,t);s\}$. Then 
\begin{equation*}
\hat u(x,s) = -\int_{-\infty}^\infty \frac{\sqrt{\kappa}}{2} s^{-\nu}\e^{-\frac{s^\nu}{\sqrt{\kappa}} \vert x - \xi \vert} \hat G(\xi,s) \, \d \xi.
\end{equation*}
\end{lem}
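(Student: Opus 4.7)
The plan is to view~\eqref{lem-uhat-eq} as a second-order linear ODE in $x$ with constant coefficients (depending parametrically on $s$), and to produce the unique bounded particular solution via a Green's function on the line. The associated homogeneous equation $v'' - (s^{2 \nu}/\kappa) v = 0$ has characteristic roots $\pm s^\nu/\sqrt{\kappa}$. Since $\Real(s) > 0$ on the relevant Laplace half-plane and $0 < \nu \le 1$, we have $\Real(s^\nu) > 0$ on the principal branch, so $\e^{+s^\nu x/\sqrt{\kappa}}$ decays as $x \to -\infty$ but blows up as $x \to +\infty$, and conversely for $\e^{-s^\nu x/\sqrt{\kappa}}$. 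The boundedness hypothesis $\hat u(\pm\infty,s) < \infty$ therefore rules out any homogeneous contribution to $\hat u$, so $\hat u$ must coincide with the integral of $\hat G$ against the bounded Green's function.

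To construct that Green's function $g(x,\xi;s)$, defined by $g_{xx} - (s^{2\nu}/\kappa)\, g = \delta(x - \xi)$ together with boundedness at $\pm\infty$, I would select the decaying exponential on each side of $\xi$:
\[
g(x,\xi;s) =
\begin{cases}
A\, \e^{+s^\nu (x-\xi)/\sqrt{\kappa}}, & x < \xi, \\
A\, \e^{-s^\nu (x-\xi)/\sqrt{\kappa}}, & x > \xi,
\end{cases}
\]
where continuity at $x = \xi$ has already been used to equate the two coefficients. The standard jump condition $g_x(\xi^+,\xi;s) - g_x(\xi^-,\xi;s) = 1$ then gives $-2A s^\nu/\sqrt{\kappa} = 1$, hence $A = -\tfrac{1}{2}\sqrt{\kappa}\, s^{-\nu}$. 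The two branches assemble into the single closed-form expression $g(x,\xi;s) = -\tfrac{1}{2}\sqrt{\kappa}\, s^{-\nu}\, \e^{-s^\nu |x - \xi|/\sqrt{\kappa}}$, and forming $\int_{-\infty}^\infty g(x,\xi;s)\,\hat G(\xi,s)\,\d\xi$ reproduces the formula asserted in the lemma. As a cross-check, one may instead apply the Fourier transform in $x$: the equation becomes algebraic, $-(\omega^2 + s^{2\nu}/\kappa)\,\tilde{\hat u}(\omega,s) = \tilde{\hat G}(\omega,s)$, and inverting via the classical pair $\mathcal{F}^{-1}\{(\omega^2 + k^2)^{-1}\} = (1/2k)\,\e^{-k|x|}$, valid when $\Real k > 0$, recovers exactly the same kernel with $k = s^\nu/\sqrt{\kappa}$.

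The only substantive obstacle is branch and integrability bookkeeping: one must be sure that $\Real(s^\nu) > 0$ on the relevant Laplace half-plane so that the "decaying" exponentials really decay, and that $\hat G(\cdot,s)$ falls off fast enough in $\xi$ for the convolution integral to converge. Since the paper works at the formal Laplace-transform level and treats all such questions as outside its scope, these points are routine and the proof essentially reduces to the short Green's-function construction above.
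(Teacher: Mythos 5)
Your proposal is correct and arrives at the lemma by essentially the same route as the paper: the paper writes the general solution via the variation-of-constants formula and then fixes the two arbitrary constants by demanding boundedness as $x \rightarrow \pm \infty$, which is precisely the variation-of-constants form of your jump-condition construction of the kernel $-\frac{\sqrt{\kappa}}{2} s^{-\nu} \e^{-\frac{s^\nu}{\sqrt{\kappa}} \vert x - \xi \vert}$. Your observation that boundedness kills any homogeneous contribution, and your remark that the convergence and branch issues are treated only formally, are both consistent with the paper's (formal) argument.
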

\begin{proof}
Using the variation of constants formula, the general solution of \eqref{lem-uhat-eq} can be written as
\begin{align*}
\hat u(x,s) & = \e^{\frac{s^\nu}{\sqrt{\kappa}} x} \Big[c_1 + \int_0^x \frac{\sqrt{\kappa}}{2} s^{-\nu} \e^{-\frac{s^\nu}{\sqrt{\kappa}} \xi} \hat G(\xi,s) \, \d \xi\Big] + \e^{-\frac{s^\nu}{\sqrt{\kappa}} x} \Big[c_2 - \int_0^x \frac{\sqrt{\kappa}}{2} s^{-\nu} \e^{\frac{s^\nu}{\sqrt{\kappa}} \xi} \hat G(\xi,s) \, \d \xi\Big]
\end{align*}
for arbitrary constants~$c_1$ and $c_2$. As we require $\hat u(\pm \infty,s)$ to be finite for each~$s$, we must have 
$$
c_1 = -\int_0^\infty \frac{\sqrt{\kappa}}{2} s^{-\nu} \e^{-\frac{s^\nu}{\sqrt{\kappa}} \xi} \hat G(\xi,s) \, \d \xi, \quad c_2 = -\int_{-\infty}^0 \frac{\sqrt{\kappa}}{2} s^{-\nu} \e^{\frac{s^\nu}{\sqrt{\kappa}} \xi} \hat G(\xi,s) \, \d \xi.
$$
Substituting these into the general solution and simplifying, we therefore deduce that
\begin{align*}
\hat u(x,s) & = -\int_x^\infty \frac{\sqrt{\kappa}}{2} s^{-\nu} \e^{\frac{s^\nu}{\sqrt{\kappa}} (x - \xi)} \hat G(\xi,s) \, \d \xi -\int_{-\infty}^x \frac{\sqrt{\kappa}}{2}  s^{-\nu} \e^{-\frac{s^\nu}{\sqrt{\kappa}} (x - \xi)} \hat G(\xi,s) \, \d \xi \\
& = -\int_x^\infty \frac{\sqrt{\kappa}}{2}  s^{-\nu} \e^{-\frac{s^\nu}{\sqrt{\kappa}} \vert \xi - x \vert} \hat G(\xi,s) \, \d \xi -\int_{-\infty}^x \frac{\sqrt{\kappa}}{2}  s^{-\nu} \e^{-\frac{s^\nu}{\sqrt{\kappa}} \vert x - \xi \vert} \hat G(\xi,s) \, \d \xi \\
& = -\int_{-\infty}^\infty \frac{\sqrt{\kappa}}{2} s^{-\nu} \e^{-\frac{s^\nu}{\sqrt{\kappa}} \vert x - \xi \vert} \hat G(\xi,s) \, \d \xi.
\end{align*}
\end{proof}

\section{Caputo time-fractional diffusion-wave equation on the real line}

Here we consider the inhomogeneous Caputo time-fractional diffusion-wave equation
\begin{equation}
\label{caputo-inhomogeneous}
{}_{0}^{C}D_{t}^{2 \nu} u = \kappa \frac{\partial^2 u}{\partial x^2} + F(x,t), \quad x \in \R, \quad t > 0
\end{equation}
with appropriate initial conditions as defined in \eqref{Phi-def} and \eqref{Psi-def}. We subdivide the analysis into two parts: $0 < \nu \le \frac{1}{2}$ and $\frac{1}{2} < \nu \le 1$.

\subsection{$0 < \nu \le \frac{1}{2}$}

In this subsection we assume the initial condition
\begin{equation}
\label{caputo-inhomogeneous-IC-case1}
u(x,0+) = f(x), \quad x \in \R.
\end{equation}
Since $0 < 2 \nu \le 1$, then $\ceil{2 \nu} = 1$. Taking the Laplace transform of \eqref{caputo-inhomogeneous} and using \eqref{lap-prop2}, we see that $\hat u(x,s)$ satisfies the inhomogeneous equation
\begin{equation*}
\frac{\partial^2 \hat u}{\partial x^2}(x,s) - \frac{s^{2 \nu}}{\kappa} \hat u(x,s) = -\frac{s^{2 \nu - 1}}{\kappa} f(x) - \frac{1}{\kappa} \hat F(x,s),
\end{equation*}
where $\hat F(x,s) = \L\{F(x,t);s\}$. Lemma~\ref{lem-uhat-eq-sol} gives
\begin{equation*}
\begin{split}
\hat u(x,s) & = -\int_{-\infty}^\infty \frac{\sqrt{\kappa}}{2} s^{-\nu} \e^{-\frac{s^\nu}{\sqrt{\kappa}} \vert x - \xi \vert} \Big[-\frac{s^{2 \nu - 1}}{\kappa} f(\xi) - \frac{1}{\kappa} \hat F(\xi,s)\Big] \, \d \xi,
\end{split}
\end{equation*}
whose inverse Laplace transform is
\begin{equation}
\label{caputo-case1-sol-temp}
\begin{split}
u(x,t) = \int_{-\infty}^\infty \frac{1}{2 \sqrt{\kappa}} \L^{-1}\{s^{-(1 - \nu)} \e^{-\frac{\vert x - \xi \vert}{\sqrt{\kappa}} s^\nu};t\} f(\xi) \, \d \xi + \int_{-\infty}^\infty \frac{1}{2 \sqrt{\kappa}} \L^{-1}\{s^{-\nu} \e^{-\frac{\vert x - \xi \vert}{\sqrt{\kappa}} s^\nu} \hat F(\xi,s);t\} \, \d \xi.
\end{split}
\end{equation}
We deduce from \eqref{rodrigo-fun} and \eqref{rodrigo-fun-spec} that
\begin{align*}
\L^{-1}\{s^{-(1 - \nu)} \e^{-\frac{\vert x - \xi \vert}{\sqrt{\kappa}} s^\nu};t\} & = R_{1 - \nu,\nu} \Big(\frac{\vert x - \xi \vert}{\sqrt{\kappa}},t\Big) = \D{}{0}{t}{-(1 - \nu)} R_{0,\nu}\Big(\frac{\vert x - \xi \vert}{\sqrt{\kappa}},t\Big), \\
\L^{-1}\{s^{-\nu} \e^{-\frac{\vert x - \xi \vert}{\sqrt{\kappa}} s^\nu};t\} & =  R_{\nu,\nu} \Big(\frac{\vert x - \xi \vert}{\sqrt{\kappa}},t\Big) = \D{}{0}{t}{-\nu} R_{0,\nu}\Big(\frac{\vert x - \xi \vert}{\sqrt{\kappa}},t\Big).
\end{align*}
Substituting these expressions into \eqref{caputo-case1-sol-temp} and invoking the convolution theorem, we have that the solution to the IVP for the inhomogeneous Caputo time-fractional diffusion equation ($0 < \nu \le \frac{1}{2}$) is given by
\begin{equation}
\label{caputo-case1-sol}
\begin{split}
u(x,t) & = \int_{-\infty}^\infty \frac{1}{2 \sqrt{\kappa}} R_{1 - \nu,\nu} \Big(\frac{\vert x - \xi \vert}{\sqrt{\kappa}},t\Big) f(\xi) \, \d \xi + \int_{-\infty}^\infty \int_0^t \frac{1}{2 \sqrt{\kappa}} R_{\nu,\nu} \Big(\frac{\vert x - \xi \vert}{\sqrt{\kappa}},t - \tau\Big) F(\xi,\tau) \, \d \tau \, \d \xi.
\end{split}
\end{equation}

\begin{rem}
Suppose that $F(x,t) = 0$ and $\nu = \frac{1}{2}$ in \eqref{caputo-inhomogeneous}, \eqref{caputo-inhomogeneous-IC-case1}. Then \eqref{caputo-case1-sol} simplifies to
$$
u(x,t) = \int_{-\infty}^\infty \frac{1}{2 \sqrt{\kappa}} R_{\frac{1}{2},\frac{1}{2}}\Big(\frac{\vert x - \xi \vert}{\sqrt{\kappa}},t\Big) f(\xi) \, \d \xi = \int_{-\infty}^\infty \frac{\e^{-\frac{(x - \xi)^2}{4 \kappa t}}}{\sqrt{4 \pi \kappa t}} f(\xi) \, \d \xi,
$$
where in the last step we used \eqref{rodrigo-half-frac-int} with $a = \frac{\vert x - \xi \vert}{\sqrt{\kappa}}$. This recovers the well-known Green's function solution to the IVP for the classical diffusion equation~\citep{St2008}.
\end{rem}

\begin{rem}
Let $F(x,t) = 0$ and $0 < \nu \le \frac{1}{2}$ in \eqref{caputo-inhomogeneous}, \eqref{caputo-inhomogeneous-IC-case1}. \citet{Ma1996} showed that the solution to this IVP is
$$
u(x,t) = \int_{-\infty}^\infty G_c(\xi,t) f(x - \xi) \, \d \xi = \int_{-\infty}^\infty G_c(x - \xi,t) f(\xi) \, \d \xi,
$$
where $G_c(x,t)$ is the fundamental solution of the Cauchy problem, which in turn can be written as
$$
G_c(x,t) = \frac{1}{2 \sqrt{\kappa} t^\nu} M\Big(\frac{\vert x \vert}{\sqrt{\kappa} t^\nu};\nu\Big)
$$
and $M(z;\nu)$ is the Mainardi function with the series representation~\citep{Ma1996}
$$
M(z;\nu) = \sum_{j = 0}^\infty \frac{(-1)^j z^j}{j! \Gamma(-\nu j + (1 - \nu))}, \quad 0 < \nu < 1.
$$
A related special function is the Wright function~\citep{Er1954}
$$
W(z;\alpha,\beta) = \sum_{j = 0}^\infty \frac{z^j}{j! \Gamma(\alpha j + \beta)}, \quad \alpha > -1, \quad \beta > 0.
$$
It follows that $M(z;\nu) = W(-z;-\nu,1 - \nu)$. On the other hand, we see from \eqref{caputo-case1-sol} that
$$
u(x,t) = \int_{-\infty}^\infty \frac{1}{2 \sqrt{\kappa}} R_{1 - \nu,\nu}\Big(\frac{\vert x - \xi \vert}{\sqrt{\kappa}},t\Big) f(\xi) \, \d \xi.
$$
We conclude that
$$
G_c(x - \xi,t) = \frac{1}{2 \sqrt{\kappa} t^\nu} M\Big(\frac{\vert x - \xi \vert}{\sqrt{\kappa} t^\nu};\nu\Big) = \frac{1}{2 \sqrt{\kappa}} R_{1 - \nu,\nu}\Big(\frac{\vert x - \xi \vert}{\sqrt{\kappa}},t\Big).
$$
The substitution~$a = \frac{\vert x - \xi \vert}{\sqrt{\kappa}}$ therefore leads to the interesting relation
$$
M\Big(\frac{a}{t^\nu};\nu\Big) = W\Big(-\frac{a}{t^\nu};-\nu,1 - \nu\Big) = t^\nu R_{1 - \nu,\nu}(a,t) = t^\nu \D{}{0}{t}{-(1 - \nu)} R_{0,\nu}(a,t).
$$
\end{rem}

\subsection{$\frac{1}{2} < \nu \le 1$}

Next we study \eqref{caputo-inhomogeneous} subject to the initial conditions
\begin{equation}
\label{caputo-inhomogeneous-IC-case2}
u(x,0+) = f(x), \quad D^1 u(x,0+) = g(x), \quad x \in \R.
\end{equation}
We see that $1 < 2 \nu \le 2$ and therefore $\ceil{2 \nu} = 2$. Taking the Laplace transform of \eqref{caputo-inhomogeneous} and using \eqref{lap-prop2} shows that $\hat u(x,s)$ satisfies the inhomogeneous equation
\begin{equation*}
\frac{\partial^2 \hat u}{\partial x^2}(x,s) - \frac{s^{2 \nu}}{\kappa} \hat u(x,s) = -\frac{s^{2 \nu - 1}}{\kappa} f(x) -\frac{s^{2 \nu - 2}}{\kappa} g(x) - \frac{1}{\kappa} \hat F(x,s),
\end{equation*}
where $\hat F(x,s) = \L\{F(x,t);s\}$. From Lemma~\ref{lem-uhat-eq-sol} we get
\begin{align*}
\hat u(x,s) & = -\int_{-\infty}^\infty \frac{\sqrt{\kappa}}{2}  s^{-\nu} \e^{-\frac{s^\nu}{\sqrt{\kappa}} \vert x - \xi \vert} \Big[-\frac{s^{2 \nu - 1}}{\kappa} f(\xi) -\frac{s^{2 \nu - 2}}{\kappa} g(\xi) - \frac{1}{\kappa} \hat F(\xi,s)\Big] \, \d \xi
\end{align*}
and its inverse Laplace transform is
\begin{equation}
\label{caputo-case2-sol-temp}
\begin{split}
u(x,t) & = \int_{-\infty}^\infty \frac{1}{2 \sqrt{\kappa}} \L^{-1}\{s^{-(1 - \nu)}\e^{-\frac{\vert x - \xi \vert}{\sqrt{\kappa}} s^\nu};t\} f(\xi) \, \d \xi + \int_{-\infty}^\infty \frac{1}{2 \sqrt{\kappa}} \L^{-1}\{s^{-(2 - \nu)}\e^{-\frac{\vert x - \xi \vert}{\sqrt{\kappa}} s^\nu};t\} g(\xi) \, \d \xi \\
& \quad {} + \int_{-\infty}^\infty \frac{1}{2 \sqrt{\kappa}} \L^{-1} \{s^{-\nu} \e^{-\frac{\vert x - \xi \vert}{\sqrt{\kappa}} s^\nu} \hat F(\xi,\tau);t\} \, \d \xi.
\end{split}
\end{equation}
Recalling \eqref{rodrigo-fun} and \eqref{rodrigo-fun-spec},
\begin{align*}
\L^{-1}\{s^{-(1 - \nu)}\e^{-\frac{\vert x - \xi \vert}{\sqrt{\kappa}} s^\nu};t\} & = R_{1 - \nu,\nu}\Big(\frac{\vert x - \xi \vert}{\sqrt{\kappa}},t\Big) = \D{}{0}{t}{-(1 - \nu)} R_\nu\Big(\frac{\vert x - \xi \vert}{\sqrt{\kappa}},t\Big), \\
\L^{-1}\{s^{-(2 - \nu)}\e^{-\frac{\vert x - \xi \vert}{\sqrt{\kappa}} s^\nu};t\} & =  R_{2 - \nu,\nu}\Big(\frac{\vert x - \xi \vert}{\sqrt{\kappa}},t\Big) = \D{}{0}{t}{-(2 - \nu)} R_\nu\Big(\frac{\vert x - \xi \vert}{\sqrt{\kappa}},t\Big), \\
\L^{-1} \{s^{-\nu} \e^{-\frac{\vert x - \xi \vert}{\sqrt{\kappa}} s^\nu};t\} & = R_{\nu,\nu}\Big(\frac{\vert x - \xi \vert}{\sqrt{\kappa}},t\Big) = \D{}{0}{t}{-\nu} R_\nu\Big(\frac{\vert x - \xi \vert}{\sqrt{\kappa}},t\Big).
\end{align*}
Substituting these expressions into \eqref{caputo-case2-sol-temp} and applying the convolution theorem, we derive the solution to the IVP for the inhomogeneous Caputo time-fractional wave equation ($\frac{1}{2} < \nu \le 1$) as
\begin{equation}
\label{caputo-case2-sol}
\begin{split}
u(x,t) & = \int_{-\infty}^\infty \frac{1}{2 \sqrt{\kappa}} R_{1 - \nu,\nu}\Big(\frac{\vert x - \xi \vert}{\sqrt{\kappa}},t\Big) f(\xi) \, \d \xi + \int_{-\infty}^\infty \frac{1}{2 \sqrt{\kappa}} R_{2 - \nu,\nu}\Big(\frac{\vert x - \xi \vert}{\sqrt{\kappa}},t\Big) g(\xi) \, \d \xi \\
& \quad {} + \int_{-\infty}^\infty \int_0^t \frac{1}{2 \sqrt{\kappa}} R_{\nu,\nu}\Big(\frac{\vert x - \xi \vert}{\sqrt{\kappa}},t - \tau\Big) F(\xi,\tau) \, \d \tau \, \d \xi.
\end{split}
\end{equation}

\begin{rem}
\label{cont-dep}
If $F(x,t) = 0$ and $g(x) = 0$, we observe that \eqref{caputo-case2-sol} becomes identical to \eqref{caputo-case1-sol}, thus showing the continuous dependence of the solution with respect to the parameter~$\nu$ as $\nu \rightarrow \frac{1}{2}^\pm$ (see also Remark~\ref{mainardi-ass}).
\end{rem}

\begin{ex}
\label{dalembert}
Suppose that $F(x,t) = 0$ and $\nu = 1$ in \eqref{caputo-inhomogeneous}, \eqref{caputo-inhomogeneous-IC-case2}, i.e. we are considering the classical wave equation. Recalling that $R_{0,1}(t,a) = \delta(t - a)$, then
\begin{align*}
\int_{-\infty}^\infty \frac{1}{2 \sqrt{\kappa}} R_{2 - 1,1}\Big(\frac{\vert x - \xi \vert}{\sqrt{\kappa}},t\Big) g(\xi) \, \d \xi & =\int_{-\infty}^\infty \frac{1}{2 \sqrt{\kappa}} \D{}{0}{t}{-(2 - 1)} R_{0,1}\Big(\frac{\vert x - \xi \vert}{\sqrt{\kappa}},t\Big) g(\xi) \, \d \xi \\
& = \frac{1}{2 \sqrt{\kappa}} \int_{-\infty}^\infty \Big[\int_0^t \delta\Big(\tau - \frac{\vert x - \xi \vert}{\sqrt{\kappa}}\Big) \, \d \tau\Big] g(\xi) \, \d \xi.
\end{align*}
Note that if $a > 0$ and $H$ is the Heaviside step function, then
$$
\int_0^t \delta(\tau - a) \, \d \tau = \int_{-\infty}^\infty \delta(\tau - a) H(\tau) H(t - \tau) \, \d \tau = H(a) H(t - a) = H(t - a).
$$
This implies that
\begin{align*}
\int_{-\infty}^\infty \frac{1}{2 \sqrt{\kappa}} R_{1,1}\Big(\frac{\vert x - \xi \vert}{\sqrt{\kappa}},t\Big) g(\xi) \, \d \xi & = \frac{1}{2 \sqrt{\kappa}} \int_{-\infty}^\infty H\Big(t - \frac{\vert x - \xi \vert}{\sqrt{\kappa}}\Big) g(\xi) \, \d \xi = \frac{1}{2 \sqrt{\kappa}} \int_{x - \sqrt{\kappa} t}^{x + \sqrt{\kappa} t} g(\xi) \, \d \xi.
\end{align*}
Moreover,
$$
\int_{-\infty}^\infty \frac{1}{2 \sqrt{\kappa}} R_{1 - 1,1}\Big(\frac{\vert x - \xi \vert}{\sqrt{\kappa}},t\Big) f(\xi) \, \d \xi = \int_{-\infty}^\infty \frac{1}{2 \sqrt{\kappa}} \delta\Big(t - \frac{\vert x- \xi \vert}{\sqrt{\kappa}}\Big) f(\xi) \, \d \xi.
$$
Using the scaling property of the Dirac delta function (i.e. $\delta(a x) = \frac{\delta(x)}{\vert a \vert}$ when $a \ne 0$), we see that
$$
\frac{1}{2 \sqrt{\kappa}} \delta\Big(t - \frac{\vert x- \xi \vert}{\sqrt{\kappa}}\Big) = \frac{1}{2 \sqrt{\kappa}} \delta\Big(\frac{1}{\sqrt{\kappa}} (\sqrt{\kappa} t - \vert x - \xi \vert)\Big) = \frac{1}{2 \sqrt{\kappa}} \frac{\delta(\sqrt{\kappa} t - \vert x - \xi \vert)}{\frac{1}{\sqrt{\kappa}}}
$$
and therefore
\begin{align*}
& \int_{-\infty}^\infty \frac{1}{2 \sqrt{\kappa}} R_{0,1}\Big(\frac{\vert x - \xi \vert}{\sqrt{\kappa}},t\Big) f(\xi) \, \d \xi = \int_{-\infty}^\infty \frac{1}{2} \delta(\sqrt{\kappa} t - \vert x - \xi \vert) f(\xi) \, \d \xi \\
& \qquad = \frac{1}{2} \int_{-\infty}^x \delta(\sqrt{\kappa} t - x + \xi) f(\xi) \, \d \xi + \frac{1}{2} \int_x^\infty \delta(\sqrt{\kappa} t + x - \xi) f(\xi) \, \d \xi.
\end{align*}
We have
\begin{align*}
& \frac{1}{2} \int_{-\infty}^x \delta(\sqrt{\kappa} t - x + \xi) f(\xi) \, \d \xi = \frac{1}{2} \int_{-\infty}^\infty \delta(\xi - (x - \sqrt{\kappa} t)) H(x - \xi) f(\xi) \, \d \xi \\
& \qquad = \frac{1}{2} H(x - (x - \sqrt{\kappa} t)) f(x - \sqrt{\kappa} t) = \frac{1}{2} f(x - \sqrt{\kappa} t)
\end{align*}
and
\begin{align*}
& \frac{1}{2} \int_x^\infty \delta(\sqrt{\kappa} t + x - \xi) f(\xi) \, \d \xi = \frac{1}{2} \int_{-\infty}^\infty \delta(-(\xi - x - \sqrt{\kappa} t)) H(-x + \xi) f(\xi) \, \d \xi \\
& \qquad = \frac{1}{2} \int_{-\infty}^\infty \frac{\delta(\xi - x - \sqrt{\kappa} t)}{\vert -1 \vert} H(-x + \xi) f(\xi) \, \d \xi = \frac{1}{2} H(-x + (x + \sqrt{\kappa} t)) f(x + \sqrt{\kappa} t) \\
& \qquad = \frac{1}{2} f(x + \sqrt{\kappa} t).
\end{align*}
Thus
$$
\int_{-\infty}^\infty \frac{1}{2 \sqrt{\kappa}} R_{0,1}\Big(\frac{\vert x - \xi \vert}{\sqrt{\kappa}},t\Big) f(\xi) \, \d \xi = \frac{1}{2} [f(x - \sqrt{\kappa} t + f(x + \sqrt{\kappa} t)].
$$
Substituting the above results into \eqref{caputo-case2-sol} recovers the well-known d'Alembert solution~\citep{St2008}
$$
u(x,t) = \frac{1}{2} [f(x - \sqrt{\kappa} t) + f(x + \sqrt{\kappa} t)] + \frac{1}{2 \sqrt{\kappa}} \int_{x - \sqrt{\kappa} t}^{x + \sqrt{\kappa} t} g(\xi) \, \d \xi.
$$
\end{ex}

\section{Riemann-Liouville diffusion-wave equation on the real line}

In this section we study the inhomogeneous Riemann-Liouville time-fractional diffusion-wave equation 
\begin{equation}
\label{riemann-liouville-inhomogeneous}
\D{}{0}{t}{2 \nu} u = \kappa \frac{\partial^2 u}{\partial x^2} + F(x,t), \quad x \in \R, \quad t > 0
\end{equation}
with appropriate initial conditions as defined in \eqref{Phi-def} and \eqref{Psi-def}.  We also subdivide the analysis into two parts: $0 < \nu \le \frac{1}{2}$ and $\frac{1}{2} < \nu \le 1$.

\subsection{$0 < \nu \le \frac{1}{2}$}

We assume the initial condition 
\begin{equation}
\label{riemann-liouville-inhomogeneous-IC-case1}
\D{}{0}{t}{-(1 - 2 \nu)} u(x,0+) = f(x), \quad x \in \R.
\end{equation}
Since $0 < 2 \nu \le 1$, we get $\ceil{2 \nu} = 1$. Taking the Laplace transform of \eqref{riemann-liouville-inhomogeneous} and using \eqref{lap-prop3}, we get that $\hat u(x,s)$ satisfies the inhomogeneous equation
\begin{equation*}
\frac{\partial^2 \hat u}{\partial x^2}(x,s) - \frac{s^{2 \nu}}{\kappa} \hat u(x,s) = -\frac{1}{\kappa} f(x) - \frac{1}{\kappa} \hat F(x,s),
\end{equation*}
where $\hat F(x,s) = \L\{F(x,t);s\}$. Lemma~\ref{lem-uhat-eq-sol} yields
\begin{equation*}
\begin{split}
\hat u(x,s) & = -\int_{-\infty}^\infty \frac{\sqrt{\kappa}}{2}  s^{-\nu} \e^{-\frac{s^\nu}{\sqrt{\kappa}} \vert x - \xi \vert} \Big[-\frac{1}{\kappa} f(\xi) - \frac{1}{\kappa} \hat F(\xi,s)\Big] \, \d \xi.
\end{split}
\end{equation*}
The inverse Laplace transform is
\begin{equation}
\label{riemann-liouville-case1-sol-temp}
u(x,t) = \int_{-\infty}^\infty \frac{1}{2 \sqrt{\kappa}} \L^{-1}\{s^{-\nu} \e^{-\frac{\vert x - \xi \vert}{\sqrt{\kappa}} s^\nu};t\} f(\xi) \, \d \xi + \int_{-\infty}^\infty \frac{1}{2 \sqrt{\kappa}} \L^{-1}\{s^{-\nu} \e^{-\frac{\vert x - \xi \vert}{\sqrt{\kappa}} s^\nu}\hat F(\xi,s);t\} \, \d \xi.
\end{equation}
Observing from \eqref{rodrigo-fun} and \eqref{rodrigo-fun-spec} that
$$
\L^{-1}\{s^{-\nu} \e^{-\frac{\vert x - \xi \vert}{\sqrt{\kappa}} s^\nu};t\} = R_{\nu,\nu}\Big(\frac{\vert x - \xi \vert}{\sqrt{\kappa}},t\Big) = \D{}{0}{t}{-\nu} R_\nu\Big(\frac{\vert x - \xi \vert}{\sqrt{\kappa}},t\Big),
$$
substituting this result into \eqref{riemann-liouville-case1-sol-temp} and invoking the convolution theorem, we finally obtain the solution to the IVP for the Riemann-Liouville time-fractional diffusion equation ($0 < \nu \le \frac{1}{2}$) as
\begin{equation}
\label{riemann-liouville-case1-sol}
\begin{split}
u(x,t) & = \int_{-\infty}^\infty \frac{1}{2 \sqrt{\kappa}} R_{\nu,\nu}\Big(\frac{\vert x - \xi \vert}{\sqrt{\kappa}},t\Big) f(\xi) \, \d \xi + \int_{-\infty}^\infty \int_0^t \frac{1}{2 \sqrt{\kappa}} R_{\nu,\nu}\Big(\frac{\vert x - \xi \vert}{\sqrt{\kappa}},t - \tau\Big) F(\xi,\tau) \, \d \tau \, \d \xi.
\end{split}
\end{equation}

\begin{rem}
If $F(x,t) = 0$ and $\nu = \frac{1}{2}$ in \eqref{riemann-liouville-inhomogeneous}, \eqref{riemann-liouville-inhomogeneous-IC-case1}, then \eqref{riemann-liouville-case1-sol} reduces to
$$
u(x,t) = \int_{-\infty}^\infty \frac{1}{2 \sqrt{\kappa}} R_{\frac{1}{2},\frac{1}{2}}\Big(\frac{\vert x - \xi \vert}{\sqrt{\kappa}},t\Big) f(\xi) \, \d \xi = \int_{-\infty}^\infty \frac{\e^{-\frac{(x - \xi)^2}{4 \kappa t}}}{\sqrt{4 \pi \kappa t}} f(\xi) \, \d \xi,
$$
using \eqref{rodrigo-half-frac-int} with $a = \frac{\vert x - \xi \vert}{\sqrt{\kappa}}$. As expected, this again recovers the well-known Green's function solution to the IVP for the classical diffusion equation~\citep{St2008}.
\end{rem}
 
\subsection{$\frac{1}{2} < \nu \le 1$}

Here we analyse \eqref{riemann-liouville-inhomogeneous} subject to the initial conditions
\begin{equation}
\label{riemann-liouville-inhomogeneous-IC-case2}
\D{}{0}{t}{-(2 - 2 \nu)} u(x,0+) = f(x), \quad D^1 \D{}{0}{t}{-(2 - 2 \nu)} u(x,0+) = g(x), \quad x \in \R.
\end{equation}
Taking the Laplace transform of \eqref{riemann-liouville-inhomogeneous} and using \eqref{lap-prop3}, we see that $\hat u(x,s)$ satisfies the inhomogeneous equation
\begin{equation*}
\frac{\partial^2 \hat u}{\partial x^2}(x,s) - \frac{s^{2 \nu}}{\kappa} \hat u(x,s) = -\frac{s}{\kappa} f(x) -\frac{1}{\kappa} g(x) - \frac{1}{\kappa} \hat F(x,s),
\end{equation*}
where $\hat F(x,s) = \L\{F(x,t);s\}$. From Lemma~\ref{lem-uhat-eq-sol} we have that
\begin{equation*}
\begin{split}
\hat u(x,s) & = -\int_{-\infty}^\infty \frac{\sqrt{\kappa}}{2} s^{-\nu} \e^{-\frac{s^\nu}{\sqrt{\kappa}} \vert x - \xi \vert} \Big[-\frac{s}{\kappa} f(\xi) -\frac{1}{\kappa} g(\xi) - \frac{1}{\kappa} \hat F(\xi,s)\Big] \, \d \xi
\end{split}
\end{equation*}
and whose inverse Laplace transform is
\begin{equation}
\label{riemann-liouville-case2-sol-temp}
\begin{split}
u(x,t) & = \int_{-\infty}^\infty \frac{1}{2 \sqrt{\kappa}} \L^{-1}\{s^{1 -\nu} \e^{-\frac{\vert x - \xi \vert}{\sqrt{\kappa}} s^\nu};t\} f(\xi) \, \d \xi + \int_{-\infty}^\infty \frac{1}{2 \sqrt{\kappa}} \L^{-1}\{s^{-\nu}\e^{-\frac{\vert x - \xi \vert}{\sqrt{\kappa}} s^\nu};t\} g(\xi) \, \d \xi \\
& \quad {} + \int_{-\infty}^\infty \frac{1}{2 \sqrt{\kappa}} \L^{-1}\{s^{-\nu} \e^{-\frac{\vert x - \xi \vert }{\sqrt{\kappa}} s^\nu}\hat F(\xi,s);t\} \, \d \xi.
\end{split}
\end{equation}
Recall that $\L\{y'(t);s\} = s \hat y(s) - y(0+)$; hence
$$
s \hat y(s) = \L\{y'(t) + y(0+) \delta(t);s\} \quad \text{or} \quad \L^{-1}\{s \hat y(s);t\} = y'(t) + y(0+) \delta(t).
$$
Choosing $y(t) = R_{\nu,\nu}(\frac{\vert x - \xi \vert}{\sqrt{\kappa}},t)$ in \eqref{rodrigo-fun} yields
$$
\hat y(s) = s^{-\nu} \e^{-\frac{\vert x - \xi \vert }{\sqrt{\kappa}} s^\nu}, \quad \L^{-1}\{s^{-\nu} \e^{-\frac{\vert x - \xi \vert }{\sqrt{\kappa}} s^\nu};t\} = R_{\nu,\nu}\Big(\frac{\vert x - \xi \vert}{\sqrt{\kappa}},t\Big).
$$
Also, \eqref{val-zero} implies that $y(0+) = 0$. Hence
$$
\L^{-1}\{s^{1 - \nu} \e^{-\frac{\vert x - \xi \vert }{\sqrt{\kappa}} s^\nu};t\} = \L^{-1}\{s \hat y(s);t\} = D^1 R_{\nu,\nu}\Big(\frac{\vert x - \xi \vert}{\sqrt{\kappa}},t\Big).
$$
Substituting the above results into \eqref{riemann-liouville-case2-sol-temp} and applying the convolution theorem, we deduce the solution to the IVP for the Riemann-Liouville time-fractional wave equation ($\frac{1}{2} < \nu \le 1$) to be
\begin{equation}
\label{riemann-liouville-case2-sol}
\begin{split}
u(x,t) & = \int_{-\infty}^\infty  \frac{1}{2 \sqrt{\kappa}}  D^1 R_{\nu,\nu}\Big(\frac{\vert x - \xi \vert}{\sqrt{\kappa}},t\Big) f(\xi) \, \d \xi + \int_{-\infty}^\infty  \frac{1}{2 \sqrt{\kappa}} R_{\nu,\nu}\Big(\frac{\vert x - \xi \vert}{\sqrt{\kappa}},t\Big) g(\xi) \, \d \xi \\
& \quad {} + \int_{-\infty}^\infty \int_0^t \frac{1}{2 \sqrt{\kappa}} R_{\nu,\nu}\Big(\frac{\vert x - \xi \vert}{\sqrt{\kappa}},t - \tau\Big) F(\xi,\tau) \, \d \tau \, \d \xi.
\end{split}
\end{equation}

\begin{rem}
If $F(x,t) = 0$ and $g(x) = 0$, we observe that \eqref{riemann-liouville-case2-sol} does not become identical to \eqref{riemann-liouville-case1-sol}; thus the continuous dependence of the solution with respect to the parameter~$\nu$ as $\nu \rightarrow \frac{1}{2}^\pm$ is not preserved for the Riemann-Liouville derivative (see Remarks~\ref{mainardi-ass} and \ref{cont-dep} for a comparison with the Caputo derivative).
\end{rem}

\begin{rem}
If $F(x,t) = 0$ and $\nu = 1$ in \eqref{riemann-liouville-inhomogeneous}, \eqref{riemann-liouville-inhomogeneous-IC-case2}, then following similar calculations as in Example~\ref{dalembert} we recover the d'Alembert solution for the classical wave equation~\citep{St2008} from \eqref{riemann-liouville-case2-sol}.
\end{rem}

\begin{rem}
\label{sum-fun-rem}
In \eqref{caputo-case1-sol}, \eqref{caputo-case2-sol}, \eqref{riemann-liouville-case1-sol} and \eqref{riemann-liouville-case2-sol} we make the useful observation that each can be expressed in the form (assuming the interchange of integrals is valid in the last term)
$$
u(x,t) = u_0(x,t) + \int_0^t \int_{-\infty}^\infty  \frac{1}{2 \sqrt{\kappa}} R_{\nu,\nu}\Big(\frac{\vert x - \xi \vert}{\sqrt{\kappa}},t - \tau\Big) F(\xi,\tau) \, \d \xi\, \d \tau,
$$
where $u_0(x,t)$ includes the initial data and the integral term includes the inhomogeneous term~$F(x,t)$. 
\end{rem}

\section{Initial-boundary value problems for the time-fractional diffusion-wave equation}

Let us now consider some IBVPs for the time-fractional diffusion-wave equation on the half-line, i.e.
\begin{equation}
\label{diff-wave-half-line}
D^{2 \nu} u = \kappa \frac{\partial^2 u}{\partial x^2}, \quad x > 0, \quad t > 0,
\end{equation}
where $D^{2 \nu}$ is either the Caputo operator~$\D{C}{0}{t}{2 \nu}$ or the Riemann-Liouville operator~$\D{}{0}{t}{2 \nu}$. The initial conditions are as before but changing $x \in \R$ to $x \ge 0$, and are summarised in Table~1.
\begin{table}[h]
\renewcommand{\arraystretch}{1.2}
\begin{center}
\begin{tabular}[h]{|l|c|c|c|}
\hline
Equation type & $D^{2 \nu}$ & $\nu$ & Initial condition \\
\hline 
Caputo diffusion & $\D{C}{0}{t}{2 \nu}$ & $0 < \nu \le \frac{1}{2}$ & \eqref{caputo-inhomogeneous-IC-case1} \\
\hline 
Caputo wave & $\D{C}{0}{t}{2 \nu}$ & $\frac{1}{2} < \nu \le 1$ & \eqref{caputo-inhomogeneous-IC-case2} \\
\hline 
Riemann-Liouville diffusion & $\D{}{0}{t}{2 \nu}$ & $0 < \nu \le \frac{1}{2}$ & \eqref{riemann-liouville-inhomogeneous-IC-case1} \\
\hline 
Riemann-Liouville wave & $\D{}{0}{t}{2 \nu}$ & $\frac{1}{2} < \nu \le 1$ & \eqref{riemann-liouville-inhomogeneous-IC-case2} \\
\hline 
\end{tabular}
\end{center}
\caption{Initial conditions for different IBVPs.}
\end{table}
We assume for simplicity the Dirichlet BC
\begin{equation}
\label{gen-BC}
u(0+,t) = h(t), \quad t > 0
\end{equation}
for some suitable function~$h(t)$. We remark that other types of linear BCs can also be considered~\citep{RoTh2021}.

Following the embedding method introduced by \citet{RoTh2021}, we embed the PDE~\eqref{diff-wave-half-line} and each of the initial conditions in Table~1 into an IVP for an inhomogeneous time-fractional diffusion-wave equation on the real line, i.e.
\begin{equation}
D^{2 \nu} \bar u = \kappa \frac{\partial^2 \bar u}{\partial x^2} + F(x,t), \quad x \in \R, \quad t > 0,
\end{equation}
where $F(x,t) = H(-x) \varphi(t)$ and $H$ is the Heaviside step function while $\varphi(t)$ is an arbitrary function to be determined so as to satisfy the BC~\eqref{gen-BC}. The initial conditions for $\bar u$ will involve two functions~$\bar f$ and $\bar g$ defined on $\R$ such that $\bar f(x) = f(x)$ and $\bar g(x) = g(x)$ when $x \ge 0$. Since $F(x,t) = 0$ when $x > 0$, we deduce that $\bar u$ will satisfy the PDE~\eqref{diff-wave-half-line} and the corresponding initial conditions in Table~1 when $x \ge 0$, for an arbitrary function~$\varphi(t)$. Then we set $\bar u(0+,t) = u(0+,t) = h(t)$ to determine $\varphi(t)$. We remark that the choice of the extensions~$\bar f$ and $\bar g$ is immaterial since the form of $\varphi(t)$ will get ``adjusted'' such that all conditions in the original IBVP will be satisfied in the end. 

Referring to Remark~\ref{sum-fun-rem}, the solution to all four IBVPs can be expressed, with $x > 0$, in the form
\begin{equation}
\label{IBVP-sol-form1}
u(x,t) = \bar u(x,t) = u_0(x,t) + \int_0^t \varphi(\tau) \int_{-\infty}^0 \frac{1}{2 \sqrt{\kappa}} R_{\nu,\nu}\Big(\frac{\vert x - \xi \vert}{\sqrt{\kappa}},t - \tau\Big) \, \d \xi \, \d \tau, 
\end{equation}
where $u_0(x,t)$ is given as follows:
\begin{enumerate}
\item[(i)] Caputo time-fractional diffusion equation
$$
u_0(x,t) = \int_{-\infty}^\infty \frac{1}{2 \sqrt{\kappa}} R_{1 - \nu,\nu}\Big(\frac{\vert x - \xi \vert}{\sqrt{\kappa}},t\Big) \bar f(\xi) \, \d \xi;
$$
\item[(ii)] Caputo time-fractional wave equation
$$
u_0(x,t) = \int_{-\infty}^\infty \frac{1}{2 \sqrt{\kappa}} R_{1 - \nu,\nu}\Big(\frac{\vert x - \xi \vert}{\sqrt{\kappa}},t\Big) \bar f(\xi) \, \d \xi + \int_{-\infty}^\infty \frac{1}{2 \sqrt{\kappa}} R_{2 - \nu,\nu}\Big(\frac{\vert x - \xi \vert}{\sqrt{\kappa}},t\Big) \bar g(\xi) \, \d \xi;
$$
\item[(iii)] Riemann-Liouville time-fractional diffusion equation
$$
u_0(x,t) = \int_{-\infty}^\infty \frac{1}{2 \sqrt{\kappa}} R_{\nu,\nu}\Big(\frac{\vert x - \xi \vert}{\sqrt{\kappa}},t\Big) \bar f(\xi) \, \d \xi;
$$
\item[(iv)] Riemann-Liouville time-fractional wave equation
$$
u_0(x,t) = \int_{-\infty}^\infty  \frac{1}{2 \sqrt{\kappa}}  D^1 R_{\nu,\nu}\Big(\frac{\vert x - \xi \vert}{\sqrt{\kappa}},t\Big) \bar f(\xi) \, \d \xi + \int_{-\infty}^\infty  \frac{1}{2 \sqrt{\kappa}} R_{\nu,\nu}\Big(\frac{\vert x - \xi \vert}{\sqrt{\kappa}},t\Big) \bar g(\xi) \, \d \xi.
$$
\end{enumerate}
Since $-\infty < \xi \le 0 < x$ in the integral term of \eqref{IBVP-sol-form1}, it is possible to rewrite
\begin{align*}
\int_{-\infty}^0 \frac{1}{2 \sqrt{\kappa}} R_{\nu,\nu}\Big(\frac{\vert x - \xi \vert}{\sqrt{\kappa}},t - \tau\Big) \, \d \xi & = \frac{1}{2} \int_{\frac{x}{\sqrt{\kappa}}}^\infty R_{\nu,\nu}(z,t- \tau) \, \d z = \frac{1}{2} R_{2 \nu,\nu}\Big(\frac{x}{\sqrt{\kappa}},t - \tau\Big)
\end{align*}
using \eqref{rodrigo-imp-int}. Hence \eqref{IBVP-sol-form1} is equivalent to
\begin{equation}
\label{IBVP-sol-form2}
u(x,t) = u_0(x,t) + \frac{1}{2} \int_0^t \varphi(\tau) R_{2 \nu,\nu}\Big(\frac{x}{\sqrt{\kappa}},t - \tau\Big) \, \d \tau.
\end{equation}

As mentioned previously, \eqref{IBVP-sol-form2} satisfies the PDE~\eqref{diff-wave-half-line} and the initial conditions, for an arbitrary~$\varphi(t)$. We now look for $\varphi(t)$ such that the BC~\eqref{gen-BC} is satisfied. Then
$$
h(t) = u(0+,t) = u_0(0+,t) + \frac{1}{2} \int_0^t \varphi(\tau) R_{2 \nu,\nu}(0+,t - \tau) \, \d \tau
$$
from \eqref{IBVP-sol-form2}. Taking the Laplace transform and using the convolution theorem and \eqref{rodrigo-fun},
$$
\hat h(s) = \hat u_0(0+,s) + \frac{1}{2} s^{-2 \nu} \hat \varphi(s) \quad \text{or} \quad s^{-2 \nu} \hat \varphi(s) = 2 [\hat h(s) - \hat u_0(0+,s)].
$$
Evaluating the inverse Laplace transform and recalling \eqref{lap-prop1}, we obtain
$$
\D{}{0}{t}{-2 \nu} \varphi(t) = 2 [h(t) - u_0(0+,t)].
$$
Therefore the solution of the IBVP is
\begin{equation}
\label{IBVP-sol-form3}
u(x,t) = u_0(x,t) + \frac{1}{2} \int_0^t \varphi(\tau) R_{2 \nu,\nu}\Big(\frac{x}{\sqrt{\kappa}},t - \tau\Big) \, \d \tau, \quad \D{}{0}{t}{-2 \nu} \varphi(t) = 2 [h(t) - u_0(0+,t)].
\end{equation}
Note the $u_0(x,t)$ is known from (i)-(iv) above and $h(t)$ is given.

\begin{ex}
Suppose that $\nu = \frac{1}{2}$, $f(x) = 0$ for all $x \ge 0$ and $h(t) = 1$ for all $t > 0$. These assumptions lead to an IVBP for the classical heat equation on the half-line. Define $\bar f(x) = 0$ for all $x \in \R$, so that $\bar f(x) = f(x)$ for $x \ge 0$; hence $u_0(x,t) = 0$ in (i). From the second relation in \eqref{IBVP-sol-form3} we see that $\varphi(t)$ is such that $\D{}{0}{t}{-1} \varphi(t) = 2$, and so its Laplace transform is $\frac{\hat \varphi(s)}{s} = \frac{2}{s}$ or $\varphi(t) = 2 \delta(t)$. Therefore the first equation in \eqref{IBVP-sol-form3} implies that
$$
u(x,t) = \int_0^t \delta(\tau) R_{1,\frac{1}{2}}\Big(\frac{x}{\sqrt{\kappa}},t - \tau\Big) \, \d \tau = \int_0^t \delta(\tau) \erfc\Big(\frac{x}{2 \sqrt{\kappa (t - \tau)}}\Big) \, \d \tau = \erfc\Big(\frac{x}{2 \sqrt{\kappa t}}\Big),
$$
where we used \eqref{rodrigo-erf} in the penultimate step. This is of course a well-known result~\citep{St2008}.
\end{ex}

\begin{ex}
Let us now return to the Cauchy and signalling problems studied by \citet{Ma1996}. It was shown there that the solution of the Cauchy problems~\eqref{mainardi-cauchy-diff} and \eqref{mainardi-cauchy-wave} is given by
$$
u(x,t) = \int_{-\infty}^\infty G_c(\xi,t) f(x - \xi) \, \d \xi,
$$
while the solution of the signalling problems~\eqref{mainardi-signal-diff} and \eqref{mainardi-signal-wave} is expressed as
$$
u(x,t) = \int_0^t G_s(x,\tau) h(t - \tau) \, \d \tau.
$$
The fundamental solutions~$G_c(x,t)$ and $G_s(x,t)$ in Laplace transform space are
\begin{equation}
\label{G-lap}
\hat G_c(x,s) = \frac{1}{2 \sqrt{\kappa}} s^{-(1 - \nu)} \e^{-\frac{\vert x \vert}{\sqrt{\kappa}} s^\nu}, \quad x \in \R, \quad \hat G_s(x,s) = \e^{-\frac{x}{\sqrt{\kappa}} s^\nu}, \quad x > 0.
\end{equation}
It directly follows that 
$$
\frac{\partial \hat G_s}{\partial s}(x,s) = -2 \nu x \hat G_c(x,s), \quad x > 0.
$$
Using standard properties of the Laplace transform, a reciprocity relation~\citep{Ma1996} can be deduced:
$$
x G_c(x,t) = \frac{t}{2 \nu} G_s(x,t), \quad x > 0.
$$
In fact, from \eqref{G-lap}, \eqref{rodrigo-fun} and \eqref{rodrigo-fun-spec} we have
\begin{align*}
G_c(x,t) & = \frac{1}{\sqrt{\kappa}} R_{1 - \nu,\nu}\Big(\frac{\vert x \vert}{\sqrt{\kappa}},t\Big) = \frac{1}{\sqrt{\kappa}} \D{}{0}{t}{-(1 - \nu)} R_{0,\nu}\Big(\frac{\vert x \vert}{\sqrt{\kappa}},t\Big), \quad x \in \R, \\
G_s(x,t) & = R_{0,\nu}\Big(\frac{x}{\sqrt{\kappa}},t\Big), \quad x > 0.
\end{align*}
Hence here we deduce a different type of relation between the fundamental solutions, namely
$$
G_c(x,t) = \frac{1}{\sqrt{\kappa}} \D{}{0}{t}{-(1 - \nu)} G_s(x,t), \quad x > 0.
$$
\end{ex}

\section{Discussion}

In this brief discussion we explore the possibility of generating probability distributions from a time-fractional diffusion equation ($0 < \nu \le \frac{1}{2}$). This is in the same spirit as in \citep{MaPaGo2007}.

Consider an It\^{o} process $\{X_t : t \ge 0\}$ which satisfies the stochastic differential equation
$$
\d X_t = \mu(X_t,t) \, \d t + \sigma(X_t,t) \, \d W_t,
$$
where $\mu(x,t)$ and $\sigma(x,t)$ are given deterministic functions and $\{W_t : t \ge 0\}$ is the driving Wiener process.
The Fokker-Planck equation for the probability density function~$p(x,t)$ of $X_t$ is given by the PDE
$$
D^1 p = -\frac{\partial}{\partial x}[\mu(x,t) p] + \frac{1}{2} \frac{\partial^2}{\partial x^2} [\sigma(x,t)^2 p], \quad x \in \R, \quad t > 0.
$$

As a special case, a Wiener process is also an It\^{o} process since we may take $\mu(x,t) = 0$ and $\sigma(x,t) = 1$, so that $X_t = W_t$. In this case the Fokker-Planck equation is the classical diffusion equation with $\kappa = \frac{1}{2}$:
$$
D^1 p = \frac{1}{2} \frac{\partial^2 p}{\partial x^2}, \quad x \in \R, \quad t > 0.
$$
If $p(x,0) = \delta(x)$ for $x \in \R$, then the probability density function~$p(x,t)$ of $X_t = W_t \sim N(0,t)$ is of course
$$
p(x,t) = \int_{-\infty}^\infty \frac{\e^{-\frac{(x - \xi)^2}{2 t}}}{\sqrt{2 \pi t}} \delta(\xi) \, \d \xi = \frac{\e^{-\frac{x^2}{2 t}}}{\sqrt{2 \pi t}}.
$$
It can be shown that
$$
\int_{-\infty}^\infty p(x,t) \, \d x = 1, \quad t > 0.
$$
Therefore we deduce that the IVP for the classical diffusion equation can be used to generate a normal probability distribution with mean~$0$ and variance~$t$ for the continuous random variable~$X_t$ for each $t > 0$. Here we wish to see if a time-fractional diffusion equation can also generate a more general probability distribution for some continuous random variable~$X_t$ for each $t > 0$.

With the above example as motivation, consider the following IVP for a Caputo time-fractional diffusion equation:
\begin{align*}
& \D{C}{0}{t}{2 \nu} p = \kappa \frac{\partial^2 p}{\partial x^2}, \quad x \in \R, \quad t  > 0, \\
& p(x,0) = \delta(x), \quad x \in \R.
\end{align*}
From \eqref{caputo-case1-sol}, \eqref{rodrigo-fun} and \eqref{rodrigo-fun-spec} we obtain
$$
p(x,t) = \frac{1}{2 \sqrt{\kappa}} R_{1 - \nu,\nu}\Big(\frac{\vert x \vert}{\sqrt{\kappa}},t\Big) = \frac{1}{2 \sqrt{\kappa}} \D{}{0}{t}{-(1 - \nu)} R_\nu\Big(\frac{\vert x \vert}{\sqrt{\kappa}},t\Big), \quad \hat p(x,s) = \frac{1} {\sqrt{\kappa}} s^{-(1 - \nu)} \e^{-\frac{\vert x \vert}{\sqrt{\kappa}} s^\nu},
$$
so that
\begin{align*}
\int_{-\infty}^\infty \hat p(x,s) \, \d x & = \frac{1}{\sqrt{\kappa}} s^{-(1 - \nu)} \int_0^\infty \e^{-\frac{s^\nu}{\sqrt{\kappa}} \vert x \vert} \, \d x = s^{-1}
\end{align*}
and
$$
\int_{-\infty}^\infty p(x,t) \, \d x = \int_{-\infty}^\infty \L^{-1}\{\hat p(x,s);t\} \, \d x = \L^{-1}\Big\{\int_{-\infty}^\infty \hat p(x,s) \, \d x;t\Big\} = \L^{-1}\{s^{-1};t\} = 1.
$$
Hence 
\begin{equation}
\label{pdf-caputo}
p(x,t) = \frac{1}{2 \sqrt{\kappa}} R_{1 - \nu,\nu}\Big(\frac{\vert x \vert}{\sqrt{\kappa}},t\Big) = \frac{1}{2 \sqrt{\kappa}} \D{}{0}{t}{-(1 - \nu)} R_\nu\Big(\frac{\vert x \vert}{\sqrt{\kappa}},t\Big)
\end{equation} 
is the probability density function of some continuous random variable~$X_t$ for each $t > 0$. In the special case that $\nu = \frac{1}{2}$ we have already seen that $X_t \sim N(0,t)$.

\begin{rem}
Recall that a generalised Gaussian distribution for a random variable~$X$ has a three-parameter probability density function
\begin{equation}
\label{ggd}
f_X(x) = \frac{b}{2 a \Gamma(\frac{1}{b})} \e^{-(\frac{x - c}{a})^b}, \quad a, b > 0, \quad c \in \R.
\end{equation}
This is a parametric family of symmetric distributions and includes the normal and Laplace distributions. It is also known that
$$
E(X) = c, \quad \Var(X) = \frac{a^2 \Gamma(\frac{3}{b})}{\Gamma(\frac{1}{b})}.
$$
For example, if $a = \sqrt{2 t}$, $b = 2$, $c = 0$ and $X = X_t$, then \eqref{ggd} becomes
$$
f_{X_t}(x) = \frac{\e^{-\frac{x^2}{2 t}}}{\sqrt{2 \pi t}}, \quad X_t \sim N(0,t).
$$
The probability density function~$p(x,t)$ in \eqref{pdf-caputo} is also symmetric and the normal distribution is a special case. However, in general it is not the same as a generalised Gaussian distribution. 
\end{rem}

Let us repeat the above calculations for the Riemann-Liouville time-fractional diffusion equation, i.e. consider the IVP
\begin{equation*}
\begin{split}
& \D{}{0}{t}{2 \nu} p = \kappa \frac{\partial^2 p}{\partial x^2}, \quad x \in \R, \quad t > 0, \\
& \D{}{0}{t}{-(1 - 2\nu)} p(x,0) = \delta(x), \quad x \in \R.
\end{split}
\end{equation*}
Then \eqref{riemann-liouville-case1-sol}, \eqref{rodrigo-fun} and \eqref{rodrigo-fun-spec} yield
$$
p(x,t) = \frac{1}{2 \sqrt{\kappa}} R_{\nu,\nu}\Big(\frac{\vert x \vert}{\sqrt{\kappa}},t\Big) = \frac{1}{2 \sqrt{\kappa}} \D{}{0}{t}{-\nu} R_\nu\Big(\frac{\vert x \vert}{\sqrt{\kappa}},t\Big), \quad \hat p(x,s) = \frac{1}{2 \sqrt{\kappa}} s^{-\nu} \e^{-\frac{s^\nu}{\sqrt{\kappa}} \vert x \vert}.
$$
Moreover, 
$$
\int_{-\infty}^\infty \hat p(x,s) \, \d x = \frac{1}{\sqrt{\kappa}} s^{-\nu} \int_0^\infty \e^{-\frac{s^\nu}{\sqrt{\kappa}} \vert x \vert} \, \d x = s^{-2 \nu}
$$
and
$$
\int_{-\infty}^\infty p(x,t) \, \d x = \int_{-\infty}^\infty \L^{-1}\{\hat p(x,s);t\} \, \d x = \L^{-1}\Big\{\int_{-\infty}^\infty \hat p(x,s) \, \d x;t\Big\} = \L^{-1}\{s^{-2 \nu};t\} = \frac{t^{2 \nu - 1}}{\Gamma(2 \nu)}.
$$
Hence $\int_{-\infty}^\infty p(x,t) \, \d x = 1$ if and only if $\nu = \frac{1}{2}$. Therefore the Riemann-Liouville time-fractional diffusion equation can be used to generate probability distributions only when $\nu = \frac{1}{2}$.

\section{Concluding remarks}

In this article we considered the time-fractional diffusion-wave equation for both Caputo and Riemann-Liouville fractional time derivatives.   Using the Laplace transform, we studied IVPs for this equation and found the formal solutions. For this purpose we defined a useful auxiliary function and studied some of its properties, including a derivation of fractional-order integral and ordinary differential equations that it satisfies. Then we formulated IVBPs for the time-fractional diffusion-wave equation, applied an embedding approach and used the results for IVPs to find the formal solutions of the IBVPs. Finally, we explored the possibility of using the time-fractional diffusion equation to generate probability distributions. We showed that the Caputo operator is able to generate probability density functions for any $0 < \nu \le \frac{1}{2}$ but for the Riemann-Liouville operator it is only possible when $\nu = \frac{1}{2}$. Current works in progress by the author are the extension of the results of this article to a space-fractional diffusion-wave equation, as in \citep{MaPaGo2007}, and the consideration of free boundary problems for the time-fractional diffusion-wave equation following the idea in \citet{RoTh2021}.

\bigskip
\noindent
{\Large \bf Appendix}

\begin{appendix}
\bigskip
Here we will prove \eqref{rodrigo-real-int}. Suppose that $0 < \nu \le \frac{1}{2}$ and define $\hat f(s) = \e^{-a s^\nu}$. By the complex inversion formula, the inverse Laplace transform is
$$
R_{0,\nu}(a,t) = f(t) = \L^{-1}\{\e^{-a s^\nu};t\} = \frac{1}{2 \pi \i} \int_{\gamma - \i \infty}^{\gamma + \i \infty} \e^{s t - a s^\nu} \, \d s.
$$
Since $s = 0$ is a branch point of the integrand, we write
\begin{align*}
\frac{1}{2 \pi \i} \oint_C \e^{s t - a s^\nu} \, \d s & = \frac{1}{2 \pi \i} \int_{\gamma - \i T}^{\gamma + \i T} \e^{s t - a s^\nu} \, \d s + \frac{1}{2 \pi \i} \int_{BD} \e^{s t - a s^\nu} \, \d s + \frac{1}{2 \pi \i} \int_{DE} \e^{s t - a s^\nu} \, \d s \\
& \quad {} + \frac{1}{2 \pi \i} \int_{EFG} \e^{s t - a s^\nu} \, \d s + \frac{1}{2 \pi \i} \int_{GH} \e^{s t - a s^\nu} \, \d s + \frac{1}{2 \pi \i} \int_{HA} \e^{s t - a s^\nu} \, \d s,
\end{align*}
where $C$ is the contour in Figure~3.
\begin{figure}[ht]
\centering
\begin{center}
\begin{tikzpicture}[pics/arrow/.style = {code = {\draw[very thick, -stealth] (0,0) -- (0.5,0);}},
   declare function = {r = 0.8; R = 3.5; h = 0.3; gamma(\x) = 180 - asin(h/\x);},
   s/.style = {sloped, allow upside down, pos = #1}, s/.default = 0.5]
   \draw[very thick] 
   ({gamma(r)}:r) node[right]{$E$} 
   arc[start angle = {gamma(r)}, end angle = {0}, radius = r] node[right]{$F$}
   arc[start angle = {0}, end angle = {-gamma(r)}, radius = r] node[right]{$G$} 
   -- pic[s]{arrow} ({-gamma(R)}:R) node[left]{$H$}   
   arc[start angle = {-gamma(R)}, end angle = -45, radius = R] pic[s]{arrow} node[below]{$A$} node[above right]{$\gamma - \mathrm{i} T$}
   -- pic[s]{arrow} (45:R) node[above]{$B$} node[right]{$\gamma + \mathrm{i} T$} 
   arc[start angle = 45, end angle = {gamma(R)}, radius = R] pic[s]{arrow} node[left]{$D$}
   -- pic[s]{arrow} cycle
   ;
   
   \draw (-R-1, 0) -- (R+1, 0) node[right]{$\mathrm{Re}(s)$}
   (0, -R-1) -- (0, R+1) node [above]{$\mathrm{Im}(s)$} 
   (0, 0) node[below right]{$O$} -- coordinate (aux) (45:r) (aux) to[bend left] ++ (60:1) node[right]{$\epsilon$}
   (0, 0) node[below right]{$O$} -- coordinate (aux) (120:R) (aux) to[bend left] ++ (60:1) node[right]{$R$}
   ;
\end{tikzpicture}
\end{center}
\caption{A modifed Bromwich contour~$C$ to avoid the branch point at $s = 0$.}
\end{figure}
Since the only singularity at $s = 0$ is outside $C$, the integral on the left is zero. 

Suppose that $s = R \e^{\i \theta}$, so that $s^\nu = R^\nu \e^{\i \theta \nu}$. Along $BD$ we have $0 < \theta < \pi$ or $0 < \theta \nu < \pi \nu \le \frac{\pi}{2}$. Similarly, along $HA$ we see that $-\pi < \theta < 0$ or $-\frac{\pi}{2} \le -\pi \nu < \theta \nu < 0$. In either case we have $\cos(\theta \nu) > 0$. Hence 
$$
\vert \hat f(s) \vert = \vert \e^{-a R^\nu s^{\i \theta \nu}} \vert = \vert \e^{-a R^\nu [\cos(\theta \nu) + \i \sin(\theta \nu)]} \vert = \e^{-a R^\nu \cos(\theta \nu)} < \frac{1}{R^\nu}
$$
for $R$ sufficiently large since $a > 0$ and $\cos(\theta \nu) > 0$. Therefore the integrals along $BD$ and $HA$ tend to zero as $R \rightarrow \infty$. Note that the above argument breaks down if $\frac{1}{2} < \nu \le 1$ since we cannot guarantee that $\cos(\theta \nu) > 0$. 

We have
\begin{align*}
f(t) & = \frac{1}{2 \pi \i} \int_{\gamma - \i \infty}^{\gamma + \i \infty} \e^{s t - a s^\nu} \, \d s = -\lim_{\stackrel{R \rightarrow \infty}{\epsilon \rightarrow 0}} \frac{1}{2 \pi \i} \Big[\int_{DE} \e^{s t - a s^\nu} \, \d s + \int_{EFG} \e^{s t - a s^\nu} \, \d s + \int_{GH} \e^{s t - a s^\nu} \, \d s\Big].
\end{align*}
Along $DE$ we have $s = x \e^{\i \pi} = -x$ as $x$ varies from $R$ to $\epsilon$. Then $s^\nu = x^\nu \e^{\i \pi \nu}$ and
$$
\int_{DE} \e^{s t - a s^\nu} \, \d s = -\int_R^{\epsilon} \e^{x \e^{\pi i} t - a x^\nu \e^{\i \pi \nu}} \d x = \int_\epsilon^R \e^{-x t - a x^\nu \e^{\i \pi \nu}} \, \d x.
$$
Along $GH$ we have $s = x \e^{-\i \pi} = -x$ as $x$ varies from $\epsilon$ to $R$. Then $s^\nu = x^\nu \e^{-\i \pi \nu}$ and
$$
\int_{GH} \e^{s t - a s^\nu} \, \d s = -\int_{\epsilon}^R \e^{x \e^{-\pi i} t - a x^\nu \e^{-\i \pi \nu}} \d x = -\int_\epsilon^R \e^{-x t - a x^\nu \e^{-\i \pi \nu}} \, \d x.
$$
Along $EFG$ we see that $s = \epsilon \e^{\i \theta}$ as $\theta$ varies from $\pi$ to $-\pi$, $s^\nu = \epsilon^\nu \e^{\i \theta \nu}$ and $\d s = \i \epsilon \e^{\i \theta} \, \d \theta$. Then 
$$
\int_{EFG} \e^{s t - a s^\nu} \, \d s = \int_\pi^{-\pi} \e^{\epsilon \e^{\i \theta} t - a \epsilon^\nu \e^{\i \theta \nu}} \i \epsilon \e^{\i \theta} \, \d \theta,
$$
which tends to zero as $\epsilon \rightarrow 0^+$. Thus, when $0 < \nu \le \frac{1}{2}$, we deduce that
\begin{align*}
R_\nu(a,t) = f(t) & = -\frac{1}{2 \pi \i} \Big[\int_0^\infty \e^{-x t - a x^\nu \e^{\i \pi \nu}} \, \d x - \int_0^\infty \e^{-x t - a x^\nu \e^{-\i \pi \nu}} \, \d x\Big] \\
& = -\frac{1}{2 \pi \i} \int_0^\infty \e^{-x t} \{\e^{-a x^\nu [\cos(\pi \nu) + \i \sin(\pi \nu)]} - \e^{-a x^\nu [\cos(\pi \nu) - \i \sin(\pi \nu)]}\} \, \d x \\
& =  -\frac{1}{2 \pi \i} \int_0^\infty \e^{-x t} \e^{-a x^\nu \cos(\pi \nu)} (-2 \i) \sin(a x^\nu \sin(\pi \nu)) \, \d x \\
& = \frac{1}{\pi} \int_0^\infty \e^{-t x} \e^{-a \cos(\pi \nu) x^\nu} \sin(a \sin (\pi \nu) x^\nu) \, \d x.
\end{align*} 
\end{appendix}


\begin{thebibliography}{99}
\setlength{\itemsep}{-0.5mm}

\bibitem[Diethelm(2010)]{Di2010} Diethelm,~K. (2010) {\it The analysis of fractional differential equations}. Lecture Notes in Mathematics, vol.~2004. Berlin: Springer-Verlag.

\bibitem[Erd\'elyi(1954)]{Er1954} Erd\'elyi, A. (ed.) (1954) {\it Tables of integral transforms}. vol.~2. New York: McGraw-Hill.  

\bibitem[Goos~{\it et~al.}(2015)]{GoReRoSa2015} Goos, D., Reyero, G., Roscani, S. and Santillan Marcus, E. (2015) `On the initial-boundary-value problem for the time-fractional diffusion equation on the real positive semiaxis', {\it Int.~J.~Differ.~Equ.}, vol.~2015, 439419. doi:10.1155/2015/439419.

\bibitem[Mainardi(1993)]{Ma1993} Mainardi,~F. (1993) `Fractional diffusive waves in viscoelastic solids', in Wegner,~J.L. and Norwood,~F.R.~(eds) {\it Nonlinear waves in solids}. Fairfield: ASME/AMR, pp.~93-97.

\bibitem[Mainardi(1996)]{Ma1996} Mainardi,~F. (1996) `The fundamental solutions for the fractional diffusion-wave equation', {\it Appl.~Math.~Lett.}, 9(6), pp.~23-28.

\bibitem[Mainardi(2012)]{Ma2012} Mainardi,~F. (2012) `Fractional calculus in wave propagation problems', arXiv:1202.0261. Available at:~\url{https://arXiv:1202.0261} (Accessed:~29~August~2021).
 
\bibitem[Mainardi and Pagnini(2003)]{MaPa2003} Mainardi,~F. and Pagnini, G. (2003) `The Wright functions as solutions of the time-fractional diffusion equation', {\it Appl.~Math.~Comp.}, 141(1), pp.~51-62.

\bibitem[Mainardi, Paradisi and Gorenflo(2007)]{MaPaGo2007} Mainardi,~F., Paradisi, P. and Gorenflo, R. (2007) `Probability distributions generated by fractional diffusion equations', arXiv:0704.0320. Available at:~\url{https://arxiv.org/abs/0704.0320} (Accessed:~29~August~2021).

\bibitem[Miller and Ross(1993)]{MiRo1993} Miller,~K.S. and Ross,~B. (1993) {\it An introduction to the fractional calculus and fractional differential equations}. New York: Wiley \& Sons.

\bibitem[Nigmatullin(1986)]{Ni1986} Nigmatullin,~R.R. (1986) `The realization of the generalized transfer equation in a medium with fractal geometry', {\it Phys.~Stat.~Sol.~B}, 133(1), pp.~425-430.

\bibitem[Ortiguiera(2011)]{Or2011} Ortigueira, M.D. (2011) {\it Fractional calculus for scientists and engineers}. Lecture Notes in Electrical Engineering, vol.~84. Dordrecht: Springer-Verlag.

\bibitem[Podlubny(1999)]{Po1999} Podlubny, I. (1999) {\it Fractional differential equations}. Mathematics in Science and Engineering, vol.~198. San Diego: Academic Press. 

\bibitem[Rodrigo(2016)]{Ro2016} Rodrigo, M.R. (2016) `On fractional matrix exponentials and their explicit calculation', {J.~Differ.~Equ.}, 261(7), pp.~4223-4243.

\bibitem[Rodrigo(2020)]{Ro2020} Rodrigo, M.R. (2020) `On a generalisation of the fundamental matrix and the solution of
operator equations', {\it Int.~J.~Appl.~Math.}, 33(3), pp.~413-438.

\bibitem[Rodrigo and Thamwattana(2021)]{RoTh2021} Rodrigo, M.R. and Thamwattana, N. (2021) `A unified analytical approach to fixed and moving boundary problems for the heat equation', {\it Math.}, 9(7), 749. doi:10.3390/math9070749.

\bibitem[Samko, Kilbas and Marichev(2002)]{SaKiMa2002} Samko, S.G., Kilbas, A.A. and Marichev, O.I. (2002) {\it Fractional integrals and derivatives: theory and applications}. London: Taylor \& Francis. 

\bibitem[Spiegel(1965)]{Sp1965} Spiegel, M.R. (1965) {\it Schaum's outline of theory and problems of Laplace transforms}. New York: McGraw-Hill.

\bibitem[Strauss(2008)]{St2008} Strauss, W.A. (2008) {\it Partial differential equations: an introduction}. Hoboken, NJ: Wiley \& Sons.

\bibitem[Wei, Chen and Zhang(2017)]{WeChZh2015} Wei, S., Chen, W. and Zhang, J. (2017) `Time-fractional derivative model for chloride ions sub-diffusion in reinforced concrete', {\it Eur.~J.~Environ.~Civ.~Eng.}, 21(3), pp.~319-331.

\end{thebibliography}
\end{document}